\newtheoremstyle{teoremas}% <name>
{14pt}% <Space above>
{14pt}% <Space below>
{\itshape}% <Body font>
{}% <Indent amount>
{\bfseries}% <Theorem head font>
{}% <Punctuation after theorem head>
{.5em}% <Space after theorem headi>
{}% <Theorem head spec (can be left empty, meaning `normal')>
\theoremstyle{teoremas}
\newtheorem{teo}{Theorem}[section]
\newtheorem{cor}[teo]{Corollary}
\newtheorem{lema}[teo]{Lemma}
\newtheorem{question}[teo]{Question}
\newtheorem{prop}[teo]{Proposition}
\newtheoremstyle{definition}% <name>
{12pt}% <Space above>
{12pt}% <Space below>
{}% <Body font>
{}% <Indent amount>
{\bfseries}% <Theorem head font>
{}% <Punctuation after theorem head>
{.5em}% <Space after theorem headi>
{}% <Theorem head spec (can be left empty, meaning `normal')>
\theoremstyle{definition}
\newtheorem{defi}[teo]{Definition}
\newtheorem{conj}[teo]{Conjecture}
\newtheorem{ej}[teo]{Example}
\newtheorem{ex}[teo]{Example}
\newtheorem{oss}[teo]{Remark}
\newtheorem{obs}[teo]{Remark}
\newcommand{\M}{\mathsf{M}}
\newcommand{\N}{\mathsf{N}}
\newcommand{\U}{\mathsf{U}}
\newcommand{\T}{\mathsf{T}}
\newcommand{\B}{\mathsf{B}}
\newcommand{\W}{\mathsf{W}}
\DeclareMathOperator{\rk}{rk}
\title{Matroid relaxations and Kazhdan--Lusztig non-degeneracy}
\author[L. Ferroni \& L. Vecchi]{Luis Ferroni and Lorenzo Vecchi}
\thanks{The first author was supported by the Marie Sk{\l}odowska-Curie PhD fellowship as part of the program INdAM-DP-COFUND-2015, Grant Number 713485 and also partially supported by Grant 2018-03968 of the Swedish research council.}
\address{Department of Mathematics, KTH Royal Institute of Technology, Stockholm, Sweden} 
\email{ferroni@kth.se}
\address{Universit\`a di Bologna, Dipartimento di Matematica, Piazza di Porta San Donato, 5, 40126 Bologna, Italy} 
\email{lorenzo.vecchi6@unibo.it}
\subjclass[2020]{05B35, 05E10, 52B40, 11B83}
\begin{document}

\begin{abstract}
    In this paper we study the interplay between the operation of circuit-hyperplane relaxation and the Kazhdan--Lusztig theory of matroids. We obtain a family of polynomials, not depending on the matroids but only on their ranks, that relate the Kazhdan--Lusztig, the inverse Kazhdan--Lusztig and the $Z$-polynomial of each matroid with those of its relaxations. As an application of our main theorem, we prove that all matroids having a free basis are non-degenerate. Additionally, we obtain bounds and explicit formulas for all the coefficients of the Kazhdan--Lusztig, inverse Kazhdan--Lusztig and $Z$-polynomial of all sparse paving matroids.\\
    
    \smallskip
    \noindent {\scshape Keywords.} Kazhdan--Lusztig polynomials of matroids, Circuit-hyperplane relaxations, Geometric lattices, Real-rooted polynomials.
\end{abstract}

\maketitle

\section{Introduction}

\subsection{Overview} Given a Coxeter group $W$, to each pair of elements that are comparable with respect to the Bruhat order $x\leq y$ it is possible to associate a polynomial $P_{x,y}(t)$ having integer coefficients. Such polynomials are known in the literature as the Kazhdan--Lusztig polynomials of $W$ and encode fundamental information about the combinatorial and algebro-geometric features of the Coxeter group.

In 2016 Elias et al. \cite{eliasproudfoot} introduced an analog of the Kazhdan--Lusztig polynomial for matroids. The role played by the Bruhat poset in the Coxeter setting is now played by the lattice of flats of the matroid, although now instead of defining a particular polynomial for each pair of comparable flats, we can define a unique polynomial for the whole matroid.

These two theories can be seen as particular cases within the broader framework of Kazhdan--Lusztig--Stanley polynomials \cite{stanley,brenti}. They share certain similarities: for example both in the Coxeter setting \cite{eliaswilliamson} and in the matroid setting \cite{bradenhuh} the resulting polynomials have non-negative coefficients. There are, however, manifest differences between the type of polynomials that can arise as the Kazhdan--Lusztig polynomial of a matroid as opposed to a Kazhdan--Lusztig polynomial of a pair of elements of a Coxeter group. For example, Gedeon et al. posed the following conjecture.

\begin{conj}[{\cite[Conjecture 3.2]{surveygedeon}}]\label{realroots}
    The Kazhdan--Lusztig polynomial of a matroid is real-rooted.
\end{conj}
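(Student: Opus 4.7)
The plan is to attack the conjecture by first establishing real-rootedness on tractable subclasses of matroids and then leveraging the circuit-hyperplane relaxation technology developed in this paper to try to extend to wider classes. Writing $P_\M(t)$ for the Kazhdan--Lusztig polynomial of $\M$, recall that $P_\M(t)$ is defined through a recursion indexed by the lattice of flats, so the natural attempt is induction on the rank (or on the cardinality of the ground set), showing that real-rootedness is preserved at each step.

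The key steps I would try, in this order, are the following. First, verify the conjecture on Boolean matroids (trivially, $P_\M=1$) and on the uniform matroids $\U_{k,n}$, for which explicit hypergeometric-type formulas for the coefficients are known; one can test whether these fit into a classical real-rooted family (Jacobi, Hermite, or similar) or whether they satisfy a three-term recurrence amenable to a Sturm sequence argument. Second, apply the explicit formulas for sparse paving matroids obtained in this paper: since relaxations are shown here to modify $P_\M(t)$ by a polynomial depending only on the ranks, one can try to compare the root distribution of $P_\M(t)$ with that of $P_{\U_{k,n}}(t)$ via classical interlacing or perturbation lemmas on real-rooted polynomials. Third, try to construct an interlacing family indexed by the flats of $\M$: a collection of polynomials whose roots interlace and whose nonnegative combinations reproduce $P_\M(t)$. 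A promising intermediate object is the $Z$-polynomial, which is palindromic, so one might instead attempt to prove $\gamma$-positivity or establish a Lefschetz interpretation, following the Hodge-theoretic framework of Braden--Huh et al.\ that was used to prove non-negativity.

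The main obstacle I anticipate is that the defining recursion of $P_\M(t)$ involves alternating sums of products of lower-rank Kazhdan--Lusztig polynomials with characteristic polynomials, and such alternating operations do not in general preserve real-rootedness. Controlling signs and magnitudes after cancellation appears to be the crucial difficulty, and it is likely the reason the conjecture remains open. The relaxation results of this paper may help by reducing the general case to matroids with a particularly rigid basis structure (sparse paving, or admitting a free basis), but bridging from these special cases back to an arbitrary matroid will almost certainly require a new idea---most plausibly a Hodge-theoretic construction identifying $P_\M(t)$ with the Poincar\'e polynomial of some module satisfying Hodge--Riemann relations, from which real-rootedness would follow.
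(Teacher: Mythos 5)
This statement is Conjecture~\ref{realroots}, quoted from Gedeon, Proudfoot, and Young; it is an \emph{open conjecture}, not a result proved in the paper. The paper offers no proof of it, and neither does your proposal. What the paper actually does is use Theorem~\ref{mainresult3} (explicit formulas for $P_\M$, $Q_\M$, $Z_\M$ of sparse paving matroids in terms of $P_{\U_{k,n}}$, $p_k$, and the number $\lambda$ of circuit-hyperplanes) together with the upper bound on $\lambda$ in Lemma~\ref{numbercircuithyperplanesparsepaving} to verify \emph{computationally} that $P_\M$ and $Z_\M$ are real-rooted for all sparse paving matroids on at most $30$ elements; that is the content of the Proposition following Theorem~\ref{mainresult3}. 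This is evidence, not a proof.

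Your text is a research plan, and you candidly acknowledge this in your final paragraph. That is the correct assessment: none of the steps you list constitutes a proof, and several would fail as stated. Uniform matroids are already known to be real-rooted (this is in the literature you cite implicitly), but the ``perturbation lemma'' step is the genuine gap. Theorem~\ref{mainresult3} shows $P_\M(t)=P_{\U_{k,n}}(t)-\lambda p_k(t)$ for sparse paving $\M$, and subtracting a fixed nonnegative polynomial $\lambda p_k$ from a real-rooted polynomial does \emph{not} in general preserve real-rootedness; one would need a compatibility or common-interlacer argument between $P_{\U_{k,n}}$ and $p_k$ valid for all admissible $\lambda$, and no such argument is given here or in the paper. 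Likewise, the alternating-sum recursion you describe is exactly why induction on rank breaks down, as you note. Your instinct that a Hodge-theoretic or Lefschetz-type argument (in the spirit of Braden--Huh--Matherne--Proudfoot--Wang) is the most plausible route is reasonable, but it is speculation, not proof. In short: there is nothing to compare against, because the paper does not claim to prove this conjecture, and your proposal correctly identifies that it remains open.
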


This conjecture, if true, would establish a profound difference between these two settings. This is because it is known that every polynomial with non-negative coefficients and constant term $1$ arises as the Kazhdan--Lusztig polynomial of a pair of elements in a suitable Coxeter group \cite{polo}. 

To approach the above conjecture, in \cite{surveygedeon} Gedeon et al. proposed a line of attack by conjecturing that there are interlacing properties for the roots of the Kazhdan--Lusztig polynomial of a matroid $\M$ and those of a contraction $\M/\{e\}$ by one element. As interlacing of roots makes sense only if the degrees of the considered polynomials differ at most by one, it is natural to ask about the degree of the Kazhdan--Lusztig polynomials of matroids.

A matroid $\M$ is said to be \emph{non-degenerate} if its Kazhdan--Lusztig polynomial has the maximal possible degree, namely $\lfloor\frac{\rk(\M) - 1}{2}\rfloor$.

\begin{conj}[{\cite[Conjecture 2.5]{surveygedeon}}]\label{conject}
    Every connected regular matroid is non-degenerate.
\end{conj}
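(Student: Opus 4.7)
The plan is to combine Seymour's decomposition theorem for regular matroids with a careful analysis of how the Kazhdan--Lusztig invariants transform under matroid sums. By Seymour's theorem, every connected regular matroid can be built by iterated $1$-, $2$-, and $3$-sums from pieces that are graphic, cographic, or isomorphic to the sporadic matroid $R_{10}$. The case of $R_{10}$ is a single matroid of bounded rank and is amenable to direct computation. Hence the problem reduces to proving (i) that graphic and cographic matroids are non-degenerate, and (ii) that non-degeneracy is preserved by connected $2$- and $3$-sums.

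As a first step I would try to derive, in the spirit of the universal polynomials obtained in this paper for relaxations, closed formulas expressing $P_{\M}(t)$ and $Z_{\M}(t)$ for a $2$-sum $\M_1 \oplus_2 \M_2$ in terms of the corresponding polynomials of $\M_1$ and $\M_2$. The strategy would mimic the relaxation argument: decompose the lattice of flats of the $2$-sum via the lattices of the summands, then inflate the defining recursion for the Kazhdan--Lusztig polynomial. An analogous derivation would be carried out for the $3$-sum. If successful, these sum formulas would pinpoint exactly what has to be controlled to propagate non-degeneracy across the decomposition, and would reduce the conjecture to the graphic and cographic cases.

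For graphic matroids I would proceed by induction on the number of edges, using deletion and contraction by an edge that is neither a loop nor a coloop, together with the non-negativity theorem of Braden--Huh. The goal is to show that the leading coefficient of $P_{\M(G)}(t)$ is strictly positive once the underlying graph is connected and sufficiently nontrivial. For cographic matroids, since $P_{\M}(t)$ does not behave well under matroid duality, the likeliest route is to transport the graphic argument to the bond matroid directly, working with cocircuits instead of cycles.

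The hardest part is twofold. On one hand, clean decomposition formulas for the Kazhdan--Lusztig polynomial under the $2$-sum and $3$-sum are not currently available, and any such formula will almost certainly carry correction terms whose positivity is itself nontrivial to establish. On the other hand, the top coefficient of $P_{\M(G)}(t)$ has no transparent combinatorial description, and the techniques of this paper rest crucially on the presence of a circuit-hyperplane, a feature that a generic connected regular matroid lacks. Overcoming this second obstacle will likely require substantially new input — perhaps a geometric interpretation of the top coefficient (in the spirit of intersection cohomology of the matroid Schubert variety) or an inductive argument that produces a non-regular but accessible intermediate matroid to which the relaxation formulas of this paper can actually be applied.
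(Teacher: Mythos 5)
The statement you were given is a conjecture (Conjecture~2.5 of Gedeon--Proudfoot--Young), and the paper does \emph{not} prove it; it remains open. What the paper actually establishes in this direction is Theorem~\ref{mainresult1} (any matroid with a free basis is non-degenerate), and then Proposition~\ref{charoxley} shows that a connected regular matroid with a free basis is a very special object: a graphic matroid obtained from a cycle with at least two edges by adding parallel copies of a single edge (i.e., one of the matroids $\T_{k,n}$). So the paper covers only a thin slice of Conjecture~\ref{conject} and is explicit about this limitation. There is therefore no proof in the paper to compare your argument against.

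Your proposal is an outline rather than a proof, and you acknowledge as much in your last paragraph. The two pillars you would need --- clean $2$-sum and $3$-sum formulas for $P_{\M}$, and non-degeneracy of graphic and cographic matroids --- are both open problems, not something you have established. The Seymour-decomposition route is a natural attack and has been contemplated in the literature, but since none of its steps is carried out here, there is nothing that could count as a proof. It is also worth noting that the relaxation machinery of this paper is essentially orthogonal to your plan: a generic connected regular matroid has no circuit-hyperplane and no free basis (this is precisely the content of Proposition~\ref{charoxley}), so the universal polynomials $p_k$, $q_k$, $z_k$ cannot be propagated along a Seymour decomposition. Your own caveat at the end --- that the techniques here rest crucially on the presence of a circuit-hyperplane, a feature a generic regular matroid lacks --- correctly identifies the obstruction, but this means the problem as a whole is left untouched.
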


On the other hand, since its introduction in \cite{eliasproudfoot}, the Kazhdan--Lusztig theory of matroids has been approached under different perspectives. In particular, some families of polynomials related to the Kazhdan--Lusztig polynomials have been introduced for matroids. In \cite{proudfootzeta} Proudfoot et al. introduced the so-called ``$Z$-polynomial'' of a matroid. Also, in \cite{gaoinverse} Gao and Xie defined the ``inverse Kazhdan--Lusztig polynomial'' of a matroid. It is now customary to use the notation $P_{\M}$ for the Kazhdan--Lusztig polynomial, $Q_{\M}$ for the inverse Kazhdan--Lusztig polynomial and $Z_{\M}$ for the $Z$-polynomial. 

The definitions of these three families of polynomials, namely $P_\M$, $Q_\M$ and $Z_\M$, are recursive. It is computationally expensive to calculate them explicitly using their defining recurrence. For some particular families of matroids such as uniform matroids \cite{gaouniform,gaoinverse}, wheels and whirls \cite{wheels} and other graphic matroids \cite{thagomizer}, it is possible to derive concrete and explicit formulas. Although these particular cases are interesting, the number of matroids belonging to each of these classes is almost negligible within the family of all matroids. On the other hand, in \cite{lee} a combinatorial formula for $P_\M$ when $\M$ is a sparse paving matroid is obtained via enumerating certain (skew) Young tableaux. The family of sparse paving matroids is conjectured to predominate asymptotically \cite{mayhew}. 

\subsection{Outline and main results}

After reviewing in Section \ref{sec:two} the basics of matroid theory and Kazhdan--Lusztig polynomials, in Section \ref{sec:three} we study the interplay between the Kazhdan--Lusztig theory of a matroid and that of its ``relaxations.'' This classical operation allows to enlarge the set of bases of a matroid by one extra element. We prove the following key fact from which we derive the remaining main results of the present article.

\begin{teo}\label{mainresult2}
    For each integer $k\geq 1$ there exist polynomials $p_k$, $q_k$ and $z_k$ with integer coefficients such that for every matroid $\M$ of rank $k$ having a circuit-hyperplane $H$ the following equalities hold:
        \begin{align*} 
            P_{\widetilde{\M}}(t) &= P_{\M}(t) + p_k(t),\\
            Q_{\widetilde{\M}}(t) &= Q_{\M}(t) + q_k(t),\\
            Z_{\widetilde{\M}}(t) &= Z_{\M}(t) + z_k(t),
        \end{align*}
    where $\widetilde{\M}$ denotes the corresponding relaxation of $\M$ by $H$. Moreover, $p_k$, $q_k$ and $z_k$ have non-negative coefficients, $\deg p_k = \deg q_k = \lfloor \frac{k-1}{2}\rfloor$ and $\deg z_k = k-1$.
\end{teo}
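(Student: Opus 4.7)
The plan is to prove all three statements simultaneously by strong induction on the rank $k$. The key structural input is a tight comparison between $\mathcal{L}(\widetilde{\M})$ and $\mathcal{L}(\M)$. Since $H$ is simultaneously a flat and a circuit, the restriction $\M|_H$ equals the uniform matroid $U_{k-1,k}$, and consequently the interval $[\hat{0}, H]$ in $\mathcal{L}(\M)$ is the lattice of flats of $U_{k-1,k}$: its rank-$i$ elements for $0 \le i \le k-2$ are the $\binom{k}{i}$ subsets of $H$ of size $i$, together with $H$ itself at the top. Crucially, the number of flats of $\M$ strictly contained in $H$ depends only on $k$, not on $\M$. Moreover, $\mathcal{L}(\widetilde{\M})$ is obtained from $\mathcal{L}(\M)$ by deleting $H$ and adjoining the $k$ new rank-$(k-1)$ flats $H \setminus \{h\}$, $h \in H$; for any flat $F$ of $\M$ not contained in $H$ the restrictions and contractions are unchanged, while for $F \subsetneq H$ one has $\M|_F = \widetilde{\M}|_F = U_{|F|,|F|}$ (boolean) and $\widetilde{\M}/F$ is precisely the relaxation of $\M/F$ at its circuit-hyperplane $H \setminus F$, of rank $k - |F| < k$.

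The inductive step is then carried out by subtracting the defining KLS-type recursion for $P_\M$ from that for $P_{\widetilde{\M}}$ (and analogously for $Q$ and $Z$). Contributions from flats $F \not\subset H$ cancel identically. The remaining terms split into two groups. First, the ``top-level'' contribution arising from the flat $H$ on the $\M$-side and the $k$ new flats $H\setminus\{h\}$ on the $\widetilde{\M}$-side; every restriction and contraction here is a uniform matroid ($U_{k-1,k}$, $U_{k-1,k-1}$, or a rank-$1$ matroid), and since $\chi_{U_{1,m}}(t) = t-1$ and $P_{U_{1,m}}(t) = 1$ do not depend on $m$, this piece is visibly universal in $k$. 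Second, for each $0 \le i \le k-2$, a sum over the $\binom{k}{i}$ size-$i$ subsets $F$ of $H$ of terms of the form $[\text{universal factor in } t,\,i,\,k] \cdot \bigl(P_{\widetilde{\M}/F}(t) - P_{\M/F}(t)\bigr)$, where the universal factor comes from $\chi_{\widetilde{\M}/F} - \chi_{\M/F}$ (computable explicitly, e.g.\ equal to $(-1)^{k-i+1}(t-1)$) together with the boolean restriction $P_{\M|_F} = 1$. By the inductive hypothesis each inner difference equals $p_{k-i}(t)$, which depends only on $k - i$. Collecting everything, the right-hand side of the subtracted equation depends only on $k$; since the KLS equation together with the degree bound $\deg P_\M < k/2$ determines $P_\M$ uniquely, it determines $p_k := P_{\widetilde{\M}} - P_\M$ as a polynomial in $k$ alone. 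The same argument produces $q_k$ and $z_k$.

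The base case $k = 1$ (where $H$ is a loop and the relaxation turns it into a coloop) is a direct computation. The degree inequalities $\deg p_k, \deg q_k \le \lfloor (k-1)/2 \rfloor$ and $\deg z_k \le k-1$ follow from the corresponding bounds on $P_\M$, $Q_\M$ and $Z_\M$ themselves (for $z_k$ one uses that the degree-$k$ leading coefficient of $Z_\M$ is universal and so cancels in the difference). Equality of degrees and non-negativity of coefficients should be extracted by applying the universal formulas to a sample matroid where the polynomials are explicitly known --- for example, a suitable uniform or sparse paving matroid --- or by combining the formulas with the non-negativity results of Braden--Huh. The main obstacle is the bookkeeping in the inductive step: one must verify that all apparent dependence on $|E|$ and on the finer structure of $\M$ really cancels, and this is precisely where the hypothesis that $H$ is a circuit-hyperplane is essential, since it forces the entire interval $[\hat{0}, H]$ to be boolean and thus collapses the sum over flats $F \subsetneq H$ into something depending only on $k$.
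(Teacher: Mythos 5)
Your overall plan coincides with the paper's: induct on the rank, use the flats characterization (Proposition \ref{flatsrelaxation}) to compare $\mathscr{L}(\widetilde{\M})$ with $\mathscr{L}(\M)$, subtract the defining recursions, check that flats $F\not\subseteq H$ contribute nothing, and observe that the surviving terms depend only on $k$. The structural facts you invoke --- that $[\widehat{0}, H]$ is the truncated boolean lattice $\mathscr{L}(\U_{k-1,k})$, that $\M^F$ is boolean for every flat $F\subsetneq H$, and that $\widetilde{\M}_F$ is the relaxation of $\M_F$ at the circuit-hyperplane $H\smallsetminus F$ --- are precisely the ones the paper uses, and the idea of extracting closed formulas by specializing to a convenient matroid is also the paper's route (Proposition \ref{minimal} and Corollary \ref{formulitas}).

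There is, however, a real confusion in the inductive step for $P$. The recursion of Theorem \ref{PM} pairs $\chi$ of the \emph{restriction} with $P$ of the \emph{contraction}: a flat $F$ contributes $\chi_{\M^F}(t)\,P_{\M_F}(t)$. For $F\subsetneq H$ (and $F\neq\varnothing, E$) one has $\widetilde{\M}^F=\M^F\cong\B_{|F|}$, so the factor multiplying the inductive difference $P_{\widetilde{\M}_F}(t)-P_{\M_F}(t)=p_{k-|F|}(t)$ is $\chi_{\B_{|F|}}(t)=(t-1)^{|F|}$. You instead attribute this factor to $\chi_{\widetilde{\M}/F}-\chi_{\M/F}$, a difference of characteristic polynomials of \emph{contractions}, which does not occur in the $P$-recursion at all; the defining equation for $p_k$ you would obtain that way is simply wrong. (The quantity $\chi_{\widetilde{\M}_F}-\chi_{\M_F}$ does enter the $Q$-recursion of Theorem \ref{QM}, where the roles of restriction and contraction are swapped and the bookkeeping is genuinely different: there one uses $Q_{\B_{|F|}}=1$ together with the Tutte identity $\chi_{\widetilde{\M}_F}(t)-\chi_{\M_F}(t)=(-1)^{k-|F|}(1-t)$ of Remark \ref{tuttecharacteristic}. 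Your sketch does not distinguish the $P$ and $Q$ cases, and as written it would not give the correct universal factor for either.)

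The non-negativity and degree claims are also not actually established. Your suggestion to apply the universal identity to a sample matroid is the right move --- with $\M=\U_{k-1,k}\oplus\U_{1,1}$ one gets $p_k=P_{\U_{k,k+1}}-P_{\U_{k-1,k}}$ and $q_k=Q_{\U_{k,k+1}}-Q_{\U_{k-1,k}}$, and $z_k$ comes cleanly from the wheel/whirl pair --- but one must still verify, from the explicit coefficient formulas for uniform and wheel Kazhdan--Lusztig polynomials, that each difference is coefficientwise non-negative and has exactly the stated degree; the paper does this in Theorem \ref{positivity} by a short binomial-coefficient estimate. Invoking Braden--Huh positivity does not close this gap: it guarantees that $P_\M$ and $P_{\widetilde{\M}}$ individually have non-negative coefficients, not that their difference does.
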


As a consequence of the known formulas for $P_{\M}$ and $Q_{\M}$ when $\M$ is a uniform matroid, and $Z_{\M}$ when $\M$ is a wheel matroid, in Corollary \ref{formulitas} we are able to deduce closed formulas for each of $p_k$, $q_k$ and $z_k$. Moreover, we discuss how Theorem \ref{mainresult2} in some sense exhibits explicitly a particular case of the fact that the Kazhdan--Lusztig polynomial and inverse Kazhdan--Lusztig polynomial are valuative invariants of the matroid polytope \cite[Theorems 8.8 and 8.9]{ardila} (i.e. they behave nicely for matroidal subdivisions of the base polytope).

Turning back our attention to the non-degeneracy of matroids, in Section \ref{sec:four} we study the notion of ``free basis'' of a matroid. We say that the basis $B$ of the matroid $\M$ is \emph{free} if $\M$ has at least two bases and $B$ has the property that for every element $e\notin B$, $B\cup\{e\}$ is a circuit. In \cite[Conjecture 22]{bansal} Bansal et al. posed a conjecture that essentially states that asymptotically almost all matroids have a free basis. From Theorem \ref{mainresult2} we deduce the following fact.

\begin{teo}\label{mainresult1}
    If a matroid has a free basis, then it is non-degenerate.
\end{teo}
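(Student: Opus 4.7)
The strategy is to realize $\M$ as a relaxation of a suitable matroid and then read off the degree of $P_{\M}$ directly from Theorem \ref{mainresult2}.

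\emph{Step 1: Un-relaxation.} My first task would be to show that if $B$ is a free basis of $\M$, then $\mathcal{B}(\M)\setminus\{B\}$ is the family of bases of a matroid $\M'$ on the same ground set, in which $B$ becomes a circuit-hyperplane, and moreover $\widetilde{\M'}=\M$. This amounts to verifying the basis-exchange axiom for $\mathcal{B}(\M)\setminus\{B\}$: given $B_1,B_2\in\mathcal{B}(\M)\setminus\{B\}$ and $x\in B_1\setminus B_2$, ordinary basis exchange in $\M$ produces some $y\in B_2\setminus B_1$ with $(B_1\setminus\{x\})\cup\{y\}\in\mathcal{B}(\M)$; if this swap happens to coincide with $B$, one invokes strong (symmetric) basis exchange together with the fact that $B\cup\{e\}$ is a circuit for every $e\notin B$ to locate a different valid $y$. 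In other words, relaxation and ``un-relaxation by a free basis'' are mutually inverse operations; this is essentially the classical duality between circuit-hyperplanes and free bases found in Oxley's book.

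\emph{Step 2: Applying Theorem \ref{mainresult2}.} Let $k=\rk(\M)$, noting that $\rk(\M')=k$ as well. Since $\M=\widetilde{\M'}$, Theorem \ref{mainresult2} yields
\[
P_\M(t)=P_{\M'}(t)+p_k(t),
\]
where $p_k$ has non-negative integer coefficients and $\deg p_k=\lfloor(k-1)/2\rfloor$. In particular the leading coefficient of $p_k$ is strictly positive.

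\emph{Step 3: Degree comparison.} By the general degree bound on Kazhdan--Lusztig polynomials of matroids, $\deg P_{\M'}\leq\lfloor(k-1)/2\rfloor$. By the non-negativity theorem of Braden--Huh, every coefficient of $P_{\M'}$ is non-negative. Reading off the coefficient of $t^{\lfloor(k-1)/2\rfloor}$ in $P_\M=P_{\M'}+p_k$, the contribution from $P_{\M'}$ is non-negative and the contribution from $p_k$ is strictly positive, so their sum is strictly positive. Hence $\deg P_\M=\lfloor(k-1)/2\rfloor$, which is exactly the non-degeneracy condition.

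The main obstacle is Step 1: while Steps 2 and 3 are essentially a bookkeeping exercise once Theorem \ref{mainresult2} and the Braden--Huh non-negativity theorem are available, one has to verify carefully that removing a free basis from the bases of $\M$ really does yield a matroid, and that $B$ then sits as a circuit-hyperplane of the resulting matroid. All subtlety lies in the exchange argument; once this correspondence between free bases and circuit-hyperplane relaxations is firmly in hand, the non-degeneracy of $\M$ follows immediately from the structural control that Theorem \ref{mainresult2} provides over $P_\M$.
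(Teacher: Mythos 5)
Your proposal follows essentially the same route as the paper: Step 1 is the paper's Lemma on the correspondence between free bases and circuit-hyperplane relaxations (the paper handles the problematic swap by using the free-basis condition directly, via the bases $B_{ij}=(B\setminus\{x_i\})\cup\{x_j\}$, rather than invoking strong basis exchange, but both work), and Steps 2--3 are exactly the paper's argument combining Theorem \ref{mainresult2} with the Braden--Huh non-negativity of $P_{\M'}$. Correct and structurally identical.
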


In Section \ref{sec:five} we address sparse paving matroids. This class is conjectured to be predominant among all matroids \cite[Conjecture 1.6]{mayhew}. Using Theorem \ref{mainresult2} we prove the following result.

\begin{teo}\label{mainresult3}
    Let $\M$ be a sparse paving matroid of rank $k$ and cardinality $n$. Assume that $\M$ has exactly $\lambda$ circuit-hyperplanes. Then:
    \begin{align*}
        P_{\M}(t) &= P_{\U_{k,n}}(t) - \lambda p_k(t),\\
        Q_{\M}(t) &= Q_{\U_{k,n}}(t) - \lambda q_k(t),\\
        Z_{\M}(t) &= Z_{\U_{k,n}}(t) - \lambda z_k(t).
    \end{align*}
\end{teo}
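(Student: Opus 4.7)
The plan is to express $\U_{k,n}$ as the end result of a chain of relaxations starting at $\M$, and then apply Theorem \ref{mainresult2} repeatedly. Recall that a sparse paving matroid of rank $k$ is characterized by the property that the set of non-bases among its $k$-subsets consists exactly of the circuit-hyperplanes, and any two such circuit-hyperplanes have symmetric difference of size at least $4$. In particular, $\U_{k,n}$ is itself a sparse paving matroid, namely the one with no circuit-hyperplanes at all.

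The first step is a small structural lemma: if $\M$ is a sparse paving matroid of rank $k$ and $H$ is a circuit-hyperplane of $\M$, then the relaxation $\widetilde{\M}$ is again a sparse paving matroid of rank $k$, and its circuit-hyperplanes are precisely the circuit-hyperplanes of $\M$ different from $H$. This is because relaxation preserves rank and only promotes the chosen $H$ to a basis; the remaining non-bases of $\widetilde{\M}$ among $k$-subsets still form a collection of size-$k$ subsets with pairwise symmetric differences at least $4$, as they inherited this property from $\M$.

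Iterating this observation, I would label the circuit-hyperplanes of $\M$ as $H_1,\dots,H_\lambda$ and set $\M_0=\M$, $\M_i=\widetilde{\M_{i-1}}$ (relaxing $H_i$ at the $i$-th step). By induction on $i$, each $\M_i$ is a sparse paving matroid of rank $k$ on $n$ elements with exactly $\lambda-i$ circuit-hyperplanes, hence $\M_\lambda$ has no circuit-hyperplanes and equals $\U_{k,n}$. Applying Theorem \ref{mainresult2} to each step $\M_{i-1}\rightsquigarrow\M_i$ gives the telescoping identities
\begin{align*}
P_{\U_{k,n}}(t) &= P_{\M}(t) + \lambda\, p_k(t),\\
Q_{\U_{k,n}}(t) &= Q_{\M}(t) + \lambda\, q_k(t),\\
Z_{\U_{k,n}}(t) &= Z_{\M}(t) + \lambda\, z_k(t),
\end{align*}
and rearranging yields the claimed formulas.

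The whole argument is short, and the only genuine content beyond Theorem \ref{mainresult2} is the closure property that the class of sparse paving matroids of fixed rank is stable under circuit-hyperplane relaxation with a predictable effect on the number of circuit-hyperplanes. I do not expect a serious obstacle here; the care point is simply to verify that after each relaxation the remaining non-bases still satisfy the sparse paving condition, which follows immediately from the pairwise symmetric-difference characterization.
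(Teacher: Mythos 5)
Your proof is correct and follows essentially the same approach as the paper: relax the circuit-hyperplanes one at a time to reach $\U_{k,n}$, and telescope Theorem~\ref{mainresult2} over the $\lambda$ steps. The only cosmetic difference is that you justify closure of the sparse paving class under relaxation via the Johnson-graph (pairwise symmetric-difference $\geq 4$) characterization of the non-bases, whereas the paper observes directly that the other circuit-hyperplanes remain circuit-hyperplanes in the relaxed matroid and invokes Lemma~\ref{sparsepavingdefi}; both are immediate.
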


In other words, we have derived closed formulas for the polynomials $P_{\M}$, $Q_\M$ and $Z_\M$ for all sparse paving matroids. This recovers and extends results by Lee et al. \cite{lee}, which were valid only for $P_{\M}$. In particular, as a consequence of the non-negativity of our polynomials $p_k$, $q_k$ and $z_k$, we are able to conclude the following statement.

\begin{teo}
    The uniform matroid $\U_{k,n}$ maximizes the Kazhdan--Lusztig coefficients, the inverse Kazhdan--Lusztig coefficients, and the $Z$-polynomial coefficients among all sparse paving matroids of rank $k$ and cardinality $n$.
\end{teo}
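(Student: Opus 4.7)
The plan is to deduce the statement immediately from the two preceding main theorems. First, I would apply Theorem \ref{mainresult3} to an arbitrary sparse paving matroid $\M$ of rank $k$ and cardinality $n$ with $\lambda$ circuit-hyperplanes, and rewrite the three identities as
\begin{align*}
    P_{\U_{k,n}}(t) - P_{\M}(t) &= \lambda\, p_k(t),\\
    Q_{\U_{k,n}}(t) - Q_{\M}(t) &= \lambda\, q_k(t),\\
    Z_{\U_{k,n}}(t) - Z_{\M}(t) &= \lambda\, z_k(t).
\end{align*}

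Second, I would invoke Theorem \ref{mainresult2}, which guarantees that the polynomials $p_k$, $q_k$, and $z_k$ have non-negative coefficients. Since $\lambda$ is a non-negative integer, each right-hand side above is a polynomial all of whose coefficients are non-negative. Reading off each coefficient separately then yields the coefficient-wise inequalities $[t^i]P_{\M}(t) \leq [t^i]P_{\U_{k,n}}(t)$, and the analogous bounds for $Q_{\M}$ and $Z_{\M}$, for every $i \geq 0$. This is exactly the asserted coefficient-wise maximality.

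Since both of the key ingredients are already in place in the excerpt, there is no substantive obstacle in the argument: the theorem is effectively an immediate corollary of Theorems \ref{mainresult2} and \ref{mainresult3}. It is worth observing that the uniform matroid $\U_{k,n}$ itself corresponds to the case $\lambda = 0$, since in $\U_{k,n}$ circuits and hyperplanes have cardinalities $k+1$ and $k-1$ respectively and hence no set can be simultaneously both, so the maximum is genuinely attained by $\U_{k,n}$.
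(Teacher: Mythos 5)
Your argument is exactly the paper's: Theorem \ref{sparsepaving} expresses $P_{\U_{k,n}}-P_{\M}$, $Q_{\U_{k,n}}-Q_{\M}$, $Z_{\U_{k,n}}-Z_{\M}$ as $\lambda$ times $p_k$, $q_k$, $z_k$, and the non-negativity of those polynomials from Theorem \ref{positivity} gives the coefficient-wise bound. Your closing remark that $\U_{k,n}$ attains the bound with $\lambda=0$ is a correct and slightly more explicit touch than the paper provides.
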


Also, as another application of Theorem \ref{mainresult2}, we give an elementary proof of the following fact.

\begin{teo}
    If $\M$ is a sparse paving matroid, then $Q_{\M}$ and $Z_{\M}$ have non-negative coefficients.
\end{teo}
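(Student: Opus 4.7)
The natural starting point is Theorem \ref{mainresult3}, which rewrites the quantities of interest as
\[
Q_{\M}(t) = Q_{\U_{k,n}}(t) - \lambda\, q_k(t), \qquad Z_{\M}(t) = Z_{\U_{k,n}}(t) - \lambda\, z_k(t).
\]
The statement is therefore equivalent to the coefficient-wise inequalities $\lambda\cdot q_k \le Q_{\U_{k,n}}$ and $\lambda\cdot z_k \le Z_{\U_{k,n}}$.

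For $Z_{\M}$ there is a quick route that bypasses this comparison altogether. The defining identity
\[
Z_{\M}(t) = \sum_{F} t^{\rk F}\, P_{\M^F}(t),
\]
where the sum runs over the flats $F$ of $\M$ and $\M^F$ denotes the localization, exhibits $Z_{\M}$ as a non-negative combination of Kazhdan--Lusztig polynomials. Since each $P_{\M^F}$ has non-negative coefficients by Braden--Huh \cite{bradenhuh}, so does $Z_{\M}$, for every matroid $\M$ and in particular for sparse paving ones. No appeal to Theorem \ref{mainresult3} or to any bound on $\lambda$ is needed for this half of the statement.

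For $Q_{\M}$ one does have to face the coefficient comparison. My plan is to combine the closed form for $Q_{\U_{k,n}}$ of Gao--Xie \cite{gaoinverse}---a sum of products of binomial coefficients---with the explicit formula for $q_k$ supplied by Corollary \ref{formulitas}, and to exploit the sparse paving structure to bound $\lambda$. Since the circuit-hyperplanes of a sparse paving matroid form a family of $k$-subsets of the ground set whose pairwise symmetric differences have size at least $4$, a Johnson-style packing estimate produces an explicit upper bound on $\lambda$ in terms of $k$ and $n$. Direct manipulation of factorials should then yield the required inequality $\lambda\cdot [t^i]q_k \le [t^i]Q_{\U_{k,n}}$ at every index $i$, perhaps after a split according to the parity of $k$ and the position of $i$ relative to $\lfloor (k-1)/2\rfloor$.

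The main obstacle is precisely this last step: turning the binomial bookkeeping into a clean uniform inequality rather than a tedious case-by-case verification. Should this prove unwieldy, the fallback is to search for a direct combinatorial interpretation of the coefficients of $Q_\M$ on sparse paving matroids, in the spirit of the tableau formula of Lee, Nasr and Radcliffe \cite{lee} for $P_\M$; such a description would make non-negativity of $Q_\M$ manifest and would sidestep the reduction through $\U_{k,n}$ altogether.
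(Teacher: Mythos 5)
Your observation for $Z_{\M}$ is mathematically correct but runs against the stated purpose of the result. Writing $Z_{\M}(t)=\sum_{F}t^{\rk F}P_{\M_F}(t)$ (you wrote $\M^F$, but that is just the Elias--Proudfoot superscript convention for what this paper calls the contraction $\M_F$) and invoking Braden--Huh on every summand does indeed prove non-negativity of $Z_{\M}$ for \emph{all} matroids in one line. But the paper says this explicitly just before the theorem: ``in \cite{bradenhuh} it has been proved that $P_{\M}$ and $Q_{\M}$ have non-negative coefficients for all matroids. The general proof is much more involved than what we are about to do here, but we include the one for $Q_{\M}$ and $Z_{\M}$ when $\M$ is sparse paving as applications of our results.'' The point is to give an elementary, self-contained proof that relies only on the relaxation formulas, the Johnson bound $\lambda\leq\lambda_{k,n}$, and (for $Z$) Lee--Nasr--Radcliffe's combinatorial positivity of $P_{\M}$ for sparse paving matroids. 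The paper's $Z$ proof plugs the Proudfoot et al.\ recursion $Z_{\U_{k,n}}(t)=t^k+\sum_{j=1}^{k}\binom{n}{k-j}t^{k-j}P_{\U_{j,n-k+j}}(t)$ into Theorem \ref{mainresult3} and shows each resulting summand $\binom{n}{k-j}P_{\U_{j,n-k+j}}-\lambda\binom{k}{j}p_j$ is non-negative coefficient-wise, reducing to the sparse paving positivity of $P$ and the inequality $\binom{n}{k-j}\lambda_{j,n-k+j}\geq\lambda_{k,n}\binom{k}{j}$. Your route buys brevity at the cost of citing the deep result the theorem is designed to reprove in a special case.

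For $Q_{\M}$ there is a genuine gap. You correctly identify the strategy, which is exactly the paper's: apply Theorem \ref{mainresult3}, bound $\lambda$ via the Johnson-type Lemma \ref{numbercircuithyperplanesparsepaving}, and compare the explicit Gao--Xie coefficients of $Q_{\U_{k,n}}$ against $\lambda q_k$. But you stop precisely where the work begins, saying ``direct manipulation of factorials should then yield the required inequality'' and flagging it as ``the main obstacle.'' That binomial comparison is the entire substance of the paper's Appendix: it requires splitting on $2k\leq n$ versus $2k>n$ (to resolve the $\min$ in $\lambda_{k,n}$), then further on $j=0$, $j=1$, and $j\geq 2$, plus a finite number of small-$k$ cases checked separately. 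Without carrying out at least one of these cases to show the mechanism works, the argument for $Q_{\M}$ is an outline rather than a proof. Your fallback suggestion of a tableau-style combinatorial interpretation of $[t^i]Q_{\M}$ for sparse paving matroids is an interesting direction, but it is speculative and not something the paper does.
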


The formulas derived from Theorem \ref{mainresult3} might be useful for testing several statements such as Conjecture \ref{realroots}. For example, combining it with the bound discussed in Lemma \ref{numbercircuithyperplanesparsepaving}, now it is possible to verify in a reasonable amount of time with a computer that:

\begin{prop}
    If $\M$ is a sparse paving matroid with at most $30$ elements, then $P_{\M}$ and $Z_{\M}$ have real roots and $Q_{\M}$ has log-concave coefficients.
\end{prop}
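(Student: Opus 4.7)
The plan is to reduce the statement to a terminating computer check, which is what Theorem \ref{mainresult3} makes feasible. The key observation is that for a sparse paving matroid $\M$ of rank $k$ on $n$ elements with $\lambda$ circuit-hyperplanes, each of $P_\M(t)$, $Q_\M(t)$, $Z_\M(t)$ is completely determined by the triple $(k,n,\lambda)$; it does not depend on the combinatorial type of $\M$ beyond these three numerical invariants. Hence the proposition amounts to verifying a property for one polynomial per triple, rather than per isomorphism class of matroid.

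First I would enumerate the admissible triples. We may assume $0 \le k \le n \le 30$, and for each pair $(k,n)$ apply the bound from Lemma \ref{numbercircuithyperplanesparsepaving} to obtain an explicit upper bound $\lambda_{\max}(k,n)$ on the number of circuit-hyperplanes. Small-rank or small-corank cases ($k \le 2$ or $n-k \le 2$) can be handled in closed form since there the degrees of $P_\M$ and $Q_\M$ are at most one. The remaining list of triples is finite and modest in size.

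Next, for each triple I would compute $P_\M(t)$, $Q_\M(t)$, $Z_\M(t)$ explicitly via
\[
    P_\M(t) = P_{\U_{k,n}}(t) - \lambda\,p_k(t), \qquad Q_\M(t) = Q_{\U_{k,n}}(t) - \lambda\,q_k(t), \qquad Z_\M(t) = Z_{\U_{k,n}}(t) - \lambda\,z_k(t),
\]
using the known closed forms for $P_{\U_{k,n}}$, $Q_{\U_{k,n}}$, $Z_{\U_{k,n}}$ together with the explicit expressions for $p_k$, $q_k$, $z_k$ from Corollary \ref{formulitas}. Real-rootedness of $P_\M$ and $Z_\M$ is then certified using a Sturm sequence over $\mathbb{Q}$, which both counts the number of distinct real roots and, by comparison with the degree, certifies that all roots are real. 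Log-concavity of the coefficients of $Q_\M$ is an elementary coefficient-by-coefficient inequality check.

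The main obstacle is not conceptual but numerical: the coefficients of $P_{\U_{k,n}}$, $Q_{\U_{k,n}}$, $Z_{\U_{k,n}}$, and of $p_k$, $q_k$, $z_k$ grow rapidly, and when $\lambda$ is close to $\lambda_{\max}(k,n)$ the subtraction involves significant near-cancellation. To keep the verification rigorous one must therefore work in exact integer (or rational) arithmetic throughout and use Sturm's theorem rather than any floating-point root-finder. With these precautions a standard computer algebra system completes the enumeration in reasonable time for all $n \le 30$, establishing the proposition.
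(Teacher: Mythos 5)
Your approach matches the paper's: the proposition is established by a computer verification over the finite set of triples $(k,n,\lambda)$ with $n\le 30$, reducing to one polynomial per triple via Theorem \ref{mainresult3} and bounding $\lambda$ via Lemma \ref{numbercircuithyperplanesparsepaving}. One small slip is your aside about small-corank cases: the degrees of $P_\M$ and $Q_\M$ are controlled by the rank (namely $\deg < k/2$), not the corank, so for $n-k\le 2$ with $k$ large these polynomials need not have degree at most one --- this is harmless, though, since those triples can simply be folded into the general enumeration.
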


In \cite{ferroni3} the first author used a similar strategy to disprove a conjecture in Ehrhart theory asserting that the coefficients of the Ehrhart polynomials of matroid polytopes were always non-negative.

Finally, in Section \ref{sec:six} we discuss what are the cases of Conjecture \ref{conject} covered by our Theorem \ref{mainresult1}; we characterize all the connected, regular matroids that have a free basis. Also, we present an example of a degenerate matroid that is not modular that was communicated to us by Nicholas Proudfoot.

\section{Basics and Notations}\label{sec:two}

In this section we recall the basic definitions about matroids and their Kazhdan--Lusztig theory, and establish the notation we will use throughout the paper. For any undefined concept, we refer to Oxley's book on matroid theory \cite{oxley}.

\subsection{Matroid theory}

\begin{defi}
    A matroid $\M=(E,\mathscr{B})$ consists of a finite set $E$ and a family of subsets $\mathscr{B}\subseteq 2^E$ that satisfies the following two conditions.
    \begin{enumerate}[(a)]
        \item $\mathscr{B}\neq \varnothing$.
        \item If $B_1\neq B_2$ are members of $\mathscr{B}$ and $a\in B_1\smallsetminus B_2$, then there exists an element $b\in B_2\smallsetminus B_1$ such that $(B_1\smallsetminus \{a\})\cup \{b\}\in \mathscr{B}$.
    \end{enumerate}
\end{defi}

We usually refer to condition (b) as the \emph{basis-exchange-property}. The size of the ground set $E$ is usually referred to as the \emph{cardinality} or the \emph{size} of the matroid. An element $e\in E$ that does not belong to any basis $B$ is said to be a \emph{loop}, whereas an element that belongs to all the bases of $\M$ is said to be a \emph{coloop}. Sometimes we will focus on \emph{loopless} matroids, i.e. matroids that do not have any loops.

One of the classic examples of matroids is that of uniform matroids. Throughout this article we will denote by $\U_{k,n}$ the uniform matroid of rank $k$ and cardinality $n$. Concretely, $\U_{k,n}$ is defined by $E=\{1,\ldots, n\}$ and $\mathscr{B} = \{ B\subseteq E: |B| = k\}$. We also denote with $\B_n=\U_{n,n}$ the Boolean matroid of rank $n$. The matroid given by $E = \varnothing$ and $\mathscr{B} = \{\varnothing\}$ is referred to as the \emph{empty matroid}; notice that it is the only matroid of cardinality $0$.\\

{\bf Warning}: in some other articles, the notation $\U_{m,d}$ stands for the uniform matroid of rank $d$ and $m+d$ elements. \\

There are several basic concepts about matroids that we will use throughout this article. 

\begin{defi}
    Let $\M=(E,\mathscr{B})$ be a matroid.
    \begin{itemize}
        \item If $I\subseteq E$ is contained in some $B\in \mathscr{B}$, we say that $I$ is \emph{independent}.
        \item If $A\subseteq E$ is not independent, we say that $A$ is \emph{dependent}.
        \item If $C\subseteq E$ is dependent but every proper subset of $C$ is independent, we say that $C$ is a \emph{circuit}.
        \item For every $A\subseteq E$ we define its \emph{rank} to be:
            \[ \rk_{\M}(A) = \max_{B\in \mathscr{B}} |A\cap B|.\]
        We say that the rank of $\M$ is just $\rk_{\M}(E)$.
        \item If $F\subseteq E$ is a subset such that for every $e\notin F$ we have $\rk_{\M}(F\cup \{e\}) > \rk_{\M}(F)$, we say that $F$ is a \emph{flat}. The family of all flats of $\M$ will be denoted by $\mathscr{F}(\M)$.
        \item If $H\subseteq E$ is a flat and $\rk_{\M}(H) = \rk_\M(E) - 1$, then we say that $H$ is a \emph{hyperplane}.
    \end{itemize}
\end{defi}

Each of these notions can be used to give equivalent definitions for matroids. When $\M$ is clear from context it is customary to just write $\mathscr{F}$, $\rk$, etc. 

Motivated by certain graph-theoretic classical concepts, it is possible to define ``connectedness'' and ``simplicity'' for matroids. The matroid $\M$ is said to be \emph{connected} if for every pair of elements $x\neq y$ in the ground set it is possible to find a circuit containing both $x$ and $y$. Furthermore, $\M$ is said to be \emph{simple} if it does not have loops nor circuits of size $2$ (namely, \emph{parallel elements}).

\subsection{Kazhdan--Lusztig polynomials}

Remarkably, the family $\mathscr{F}(\M)$ of flats of a matroid $\M$, when looked as a poset with respect to inclusion, is a geometric lattice, i.e. a lattice which is both atomistic and semi-modular. It is a fundamental result of matroid theory that up to simplification of the matroid, geometric lattices and matroids are in one-to-one correspondence. The lattice of flats of a matroid $\M$ will be denoted by $\mathscr{L}(\M)$. Since for a ranked poset one can define its ``characteristic polynomial'', and geometric lattices are ranked, we can define the characteristic polynomial $\chi_{\M}$ of a matroid $\M$ to be just the characteristic polynomial of its lattice of flats.

\begin{defi}\label{def:restriction-contraction}
    Let $\M=(E,\mathscr{B})$ be a matroid and fix $F\in \mathscr{F}(\M)$. We define:
    \begin{enumerate}[(a)]
        \item $\M_F$ the matroid with ground set $E\smallsetminus F$ and with family of flats given by the sets $F'\smallsetminus F$ for all the flats $F'\in \mathscr{F}(\M)$ such that $F'\supseteq F$.
        \item $\M^F$ the matroid with ground set $F$, whose flats are the flats contained in $F$.
    \end{enumerate}
\end{defi}

The fact that these two objects are indeed matroids can be proved by noticing that their lattices of flats are isomorphic to the intervals in $\mathscr{L}(\M)$ given by $[F,\widehat{1}]$ and $[\widehat{0}, F]$ respectively, where by $\widehat{0}$ and $\widehat{1}$ we denote the bottom and top element of the lattice. In \cite[Chapter 1-3]{oxley} the operations of ``restriction'' and ``contraction'' for matroids are defined. The matroid that we denote by $\M_F$ is the matroid obtained by contracting the flat $F$ in $\M$. On the other hand, $\M^F$ is the matroid obtained by restricting $\M$ to the flat $F$. In \cite{oxley} and a significant part of the matroid theory literature these two matroids are also denoted by $\M/F$ and $\M|F$.

\begin{obs}
    The notation for $\M^F$ and $\M_F$ is usually a cause of confusion. For example, in \cite{eliasproudfoot} the authors have denoted $\M^F$ and $\M_F$ in the opposite way. The notation we used for our Definition \ref{def:restriction-contraction} follows \cite{bradenhuh}.
\end{obs}

\begin{teo}[{\cite[Theorem 2.2]{eliasproudfoot}}]\label{PM}
    There is a unique way to assign to each matroid $\M$ a polynomial $P_{\M}(t)\in \mathbb{Z}[t]$ such that the following three conditions hold:
    \begin{enumerate}
        \item If $\rk(\M) = 0$, then $P_{\M}(t) = 1$ when $\M$ is empty, and $P_{\M}(t)=0$ otherwise.
        \item If $\rk(\M) > 0$, then $\deg P_{\M} < \frac{\rk(\M)}{2}$.
        \item For every $\M$, the following holds:
            \[ t^{\rk(\M)} P_{\M}(t^{-1}) = \sum_{F\in \mathscr{L}(\M)} \chi_{\M^F}(t) P_{\M_F}(t).\]
    \end{enumerate}
\end{teo}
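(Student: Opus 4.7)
My plan is to prove the existence and uniqueness of $P_{\M}$ simultaneously by induction on $d = \rk(\M)$. The base case $d = 0$ is exactly condition (1). For the inductive step, I assume the theorem is already established for every matroid of rank strictly less than $d$.

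The first move is to isolate the contribution of the minimum flat $F = \widehat{0}$ on the right-hand side of condition (3). Since $\chi_{\M^{\widehat{0}}}(t) = 1$ and $\M_{\widehat{0}} = \M$ (assuming $\M$ is loopless, which is essentially the case of interest), this term equals $P_{\M}(t)$ and the defining equation rewrites as
\[
t^d P_{\M}(t^{-1}) - P_{\M}(t) \;=\; S(t), \qquad S(t) := \sum_{F > \widehat{0}} \chi_{\M^F}(t)\, P_{\M_F}(t),
\]
where $S(t) \in \mathbb{Z}[t]$ is fully determined by the inductive hypothesis because $\rk(\M_F) < d$ for every $F > \widehat{0}$. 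Uniqueness then follows immediately from the degree constraint in condition (2): the polynomial $P_{\M}(t)$ must be supported in degrees $0, \ldots, \lfloor (d-1)/2 \rfloor$, while $t^d P_{\M}(t^{-1})$ is supported in degrees $\lceil (d+1)/2 \rceil, \ldots, d$; these two ranges are disjoint, so the coefficients of $P_{\M}$ can be read off unambiguously from the low-degree part of $-S(t)$.

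For existence, I would define $P_{\M}(t)$ by declaring $[t^i]P_{\M}(t) := -[t^i]S(t)$ for $0 \leq i \leq \lfloor (d-1)/2 \rfloor$ and then check that the resulting polynomial satisfies the full functional equation. Equivalently, this amounts to verifying the palindromic compatibility $[t^{d-i}]S(t) = -[t^i]S(t)$ for every $i$ in the allowed range, together with $[t^{d/2}]S(t) = 0$ when $d$ is even. This is precisely the main obstacle of the proof.

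To overcome it, the cleanest route I see is to appeal to Stanley's general existence theorem for Kazhdan--Lusztig--Stanley polynomials \cite{stanley}: the assignment $[F,G] \mapsto \chi_{(\M^G)_F}(t)$ on intervals of $\mathscr{L}(\M)$ defines a $P$-kernel, essentially because of the convolution identity satisfied by characteristic polynomials on intervals of geometric lattices, and Stanley's machinery then furnishes a unique family of polynomials satisfying axioms equivalent to (1)--(3). Alternatively, one can perform a direct verification by expanding each factor $P_{\M_F}(t)$ inside $S(t)$ via the inductive case of (3) applied to $\M_F$, swapping the order of summation to one over pairs of comparable flats $\widehat{0} < F \leq G$, and invoking the M\"obius-theoretic symmetries of characteristic polynomials on intervals of a geometric lattice in order to telescope the resulting double sum into the reflected form of $S(t)$. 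The palindromic identity should then drop out, although the bookkeeping is delicate and is where the genuine technical work of the proof would live.
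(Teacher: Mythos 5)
The paper does not prove this theorem; it states it with a citation to Elias--Proudfoot--Young, so there is no in-paper argument to compare against. Judged against the original source, your proposal is a faithful sketch of the standard proof: the induction on rank, the observation that the degree bound in (2) makes the supports of $P_{\M}(t)$ and $t^{\rk\M}P_{\M}(t^{-1})$ disjoint so that $P_{\M}$ is read off uniquely from the low-degree half of $S(t)$, and the reduction of existence to the palindromic compatibility $[t^{\rk\M - i}]S(t) = -[t^i]S(t)$ (together with the vanishing of the middle coefficient in even rank) are all exactly as in the cited proof and in the general Kazhdan--Lusztig--Stanley framework. You also correctly identify where the real mathematical content lives: showing that $[F,G]\mapsto \chi_{(\M^G)_F}(t)$ is a $P$-kernel, i.e.\ that characteristic polynomials of intervals of a geometric lattice satisfy the required convolution identity. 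Two small remarks. First, your two suggested existence routes are not really alternatives: invoking Stanley's existence theorem still requires proving the $P$-kernel property, which is the very identity your ``direct'' route would verify by M\"obius-theoretic bookkeeping; the appeal to Stanley relocates the obligation rather than discharging it, and you should say so if this were written up. Second, isolating the $F=\widehat{0}$ summand to obtain $t^{\rk\M}P_{\M}(t^{-1}) - P_{\M}(t) = S(t)$ uses $\M_{\widehat{0}}=\M$, which requires $\M$ loopless; you flag this, and it matches the original EPY statement, but the version quoted in this paper formally allows loops and that case would need to be dispatched separately before the induction starts.
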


\begin{teo}[{\cite[Theorem 1.2]{gaoinverse}}]\label{QM}
   There is a unique way to assign to each matroid $\M$ a polynomial $Q_{\M}(t)\in \mathbb{Z}[t]$ such that the following three conditions hold:
    \begin{enumerate}
        \item If $\rk(\M) = 0$, then $Q_{\M}(t) = 1$ when $\M$ is empty, and $Q_{\M}(t)=0$ otherwise.
        \item If $\rk(\M) > 0$, then $\deg Q_{\M} < \frac{\rk(\M)}{2}$.
        \item For every $\M$, the following holds:
            \[ (-t)^{\rk(\M)} Q_{\M}(t^{-1}) = \sum_{F\in \mathscr{L}(\M)} (-1)^{\rk(F)}Q_{\M^F}(t) t^{\rk(\M) - \rk (F)}\chi_{\M_F}(t^{-1}).\]
    \end{enumerate}    
\end{teo}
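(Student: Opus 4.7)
The strategy is to mirror the existence/uniqueness proof of $P_\M$ in Theorem \ref{PM} due to Elias--Proudfoot--Wakefield, adapting it to the slightly different functional equation that defines $Q_\M$. I would induct on $\rk(\M)$; the base case is handled by condition (1). For the inductive step, set $k=\rk(\M)$ and isolate the $F=\widehat{1}$ summand of the right-hand side of condition (3). Since $\M^{\widehat{1}}=\M$ and $\M_{\widehat{1}}$ is the empty matroid with $\chi_{\M_{\widehat{1}}}(t^{-1})=1$, that summand equals $(-1)^k Q_\M(t)$. Moving it to the other side rewrites (3) as
\[
(-1)^k\bigl(t^k Q_\M(t^{-1}) - Q_\M(t)\bigr) = R_\M(t),
\]
where
\[
R_\M(t) := \sum_{F\in\mathscr{L}(\M)\setminus\{\widehat{1}\}} (-1)^{\rk F}\, Q_{\M^F}(t)\, t^{k-\rk F}\, \chi_{\M_F}(t^{-1})
\]
involves only polynomials $Q_{\M^F}$ with $\rk(\M^F)<k$, all determined by the inductive hypothesis.

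Uniqueness is then a coefficient-matching exercise. Writing $Q_\M(t)=\sum_{i=0}^{d} q_i t^i$ with $d=\lfloor (k-1)/2\rfloor$, the polynomial $t^k Q_\M(t^{-1})-Q_\M(t)$ has coefficient $-q_i$ in degree $i$ for $0\leq i\leq d$, coefficient $q_{k-i}$ in degree $i$ for $k-d\leq i\leq k$, and vanishing coefficients in the strictly intermediate range $d<i<k-d$. Since $d<k-d$, the low-degree part of $R_\M(t)$ pins each $q_i$ down uniquely, and at most one $Q_\M$ of the prescribed degree can satisfy the equation.

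Existence reduces to checking that $R_\M(t)$ actually has this anti-palindromic shape: its coefficients must vanish in the middle band $d<i<k-d$, and its high-degree coefficients must be forced by its low-degree ones via the symmetry $t^k R_\M(t^{-1})=-R_\M(t)$ (after absorbing the $(-1)^k$). This is the main obstacle, and the step where genuine matroidal input is required. I would establish the symmetry by substituting $t\mapsto t^{-1}$ and multiplying by $t^k$ inside $R_\M(t)$, then applying the inductive functional equation (3) to each $Q_{\M^F}$ (whose lattice of flats is the interval $[\widehat{0},F]$ in $\mathscr{L}(\M)$) together with the standard reciprocity of characteristic polynomials relating $\chi_{\M_F}(t^{-1})$ to $\chi_{\M_F}(t)$. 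Reindexing the resulting double sum by flags $\widehat{0}\leq G\leq F\leq\widehat{1}$, swapping the order of summation, and collapsing the inner sum along the interval $[G,\widehat{1}]$ via a Möbius-type identity in $\mathscr{L}(\M)$ should produce exactly $-R_\M(t)$.

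The delicate part is the sign bookkeeping: the factor $(-1)^{\rk F}$ in the $Q$-recurrence, together with the reciprocity sign $(-1)^{\rk \M_F}$ coming from $\chi_{\M_F}$, must conspire with the inductive functional equation so that all terms outside the flag identity cancel. This is precisely the technical content that differentiates the existence of $Q_\M$ from that of $P_\M$, and I expect it to be the step where the proof is most error-prone; once it is carried out, both uniqueness and existence follow, completing the induction.
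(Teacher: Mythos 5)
The paper does not prove this statement: Theorem \ref{QM} is quoted verbatim from Gao--Xie \cite[Theorem 1.2]{gaoinverse} and used as a black box, so there is no internal proof to compare against. What you have written is an outline of the kind of argument one would expect, and its skeleton matches the standard Kazhdan--Lusztig--Stanley (KLS) pattern that Gao--Xie follow: isolate the $F=\widehat{1}$ term (correct here, since $\M^{\widehat{1}}=\M$ and $\chi_{\M_{\widehat{1}}}\equiv 1$), deduce uniqueness by degree counting, and reduce existence to showing $R_\M$ is anti-palindromic of order $k$. Your uniqueness paragraph is complete as stated, and the observation that $d<k-d$ pins the coefficients is right. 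Note also that you do not need ``middle-band vanishing'' as a separate condition: since $\deg R_\M \le k$ and $d=\lfloor(k-1)/2\rfloor$, the band $d<i<k-d$ is empty when $k$ is odd and contains only $i=k/2$ when $k$ is even, where anti-palindromicity alone forces the coefficient to vanish.

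The genuine gap is in the existence step, and it is more than a matter of bookkeeping. You invoke a ``standard reciprocity of characteristic polynomials relating $\chi_{\M_F}(t^{-1})$ to $\chi_{\M_F}(t)$,'' but no such reciprocity exists: for instance $\chi_{\U_{2,3}}(t)=t^2-3t+2$ and $t^2\chi_{\U_{2,3}}(t^{-1})=1-3t+2t^2$, which is neither $\pm\chi_{\U_{2,3}}(t)$. The identity that actually does the work is the \emph{kernel} (or convolution) property of characteristic polynomials in the KLS framework,
\[
\sum_{F\in\mathscr{L}(\M)}\chi_{\M^F}(t)\, t^{\rk(\M)-\rk(F)}\chi_{\M_F}(t^{-1})=0 \quad\text{for }\rk(\M)>0,
\]
together with its $Q$-flavoured companion; establishing anti-palindromicity of $R_\M$ from the inductive hypothesis requires this identity and a genuine flag-reindexing computation that tracks the $(-1)^{\rk F}$ signs. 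You explicitly flag this as ``the main obstacle'' and ``where the proof is most error-prone,'' which is exactly right --- but you do not carry it out, and the particular tool you reach for does not exist in the form you cite. As written, this is a plausible strategy document rather than a proof; the crux still needs to be supplied.
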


The polynomials $P_{\M}$ and $Q_{\M}$ arising from the above two results are called the \emph{Kazhdan--Lusztig} and the \emph{inverse Kazhdan--Lusztig polynomials} of the matroid $\M$, respectively. Also, in \cite[Definition 2.1]{proudfootzeta} the following definition is given.

\begin{defi}
    For every matroid $\M$ we define the \emph{$Z$-polynomial} of $\M$ to be:
        \[ Z_{\M}(t) = \sum_{F\in \mathscr{L}(\M)} t^{\rk(F)} P_{\M_F}(t).\]
\end{defi}

\begin{oss}\label{looplessness}
    In \cite{eliasproudfoot} and \cite{gaoinverse} $P_\M$ and $Q_\M$ are defined only for loopless matroids. The definitions here include the cases in which the matroids contain loops. From these definitions it follows that $P_{\M}(t)=0$ and $Q_{\M}(t)=0$ whenever $\M$ contains a loop.
\end{oss}

\section{Relaxations of a Matroid}\label{sec:three}

\subsection{Circuit-hyperplanes and relaxations}

When a matroid $\M$ possesses a subset $H$ that is at the same time a circuit and a hyperplane, we say that $H$ is a \emph{circuit-hyperplane}. It is straightforward to check that every circuit-hyperplane has cardinality equal to the rank of the matroid. An important feature of circuit-hyperplanes is that they can be ``relaxed'' in order to build a new matroid with one extra basis. More precisely:

\begin{prop}
	Let $\M=(E,\mathscr{B})$ be a matroid that has a circuit-hyperplane $H$. Let $\widetilde{\mathscr{B}}=\mathscr{B}\cup\{H\}$. Then $\widetilde{\mathscr{B}}$ is the set of bases of a matroid $\widetilde{\M}$ on $E$.
\end{prop}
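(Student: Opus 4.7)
The plan is to verify the two matroid axioms for $\widetilde{\mathscr{B}}$: non-emptiness is immediate since $\mathscr{B}\neq\varnothing$, so the content lies in checking basis exchange. The first observation I would record is that $|H|=\rk(\M)$: because $H$ is a circuit, every proper subset of $H$ is independent, so $\rk(H)=|H|-1$; because $H$ is a hyperplane, $\rk(H)=\rk(\M)-1$. This guarantees that every element of $\widetilde{\mathscr{B}}$ has the same size, a useful sanity check before comparing elements via exchange.

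Next I would split the verification of basis exchange into cases according to whether the two distinguished members of $\widetilde{\mathscr{B}}$ lie in $\mathscr{B}$ or equal $H$. The case $B_1,B_2\in\mathscr{B}$ is free, inherited from the exchange property of $\M$. The symmetric case $B_1=H$, $B_2\in\mathscr{B}$, $a\in H\smallsetminus B_2$ reduces to the augmentation axiom for $\M$: since $H$ is a circuit, $H\smallsetminus\{a\}$ is independent of size $\rk(\M)-1$, and comparing it with the independent set $B_2$ produces some $b\in B_2\smallsetminus H$ (note $a\notin B_2$) such that $(H\smallsetminus\{a\})\cup\{b\}$ is independent, hence a basis of $\M$ and thus a member of $\widetilde{\mathscr{B}}$.

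The step I expect to be the delicate one is the remaining case $B_1\in\mathscr{B}$, $B_2=H$, $a\in B_1\smallsetminus H$, where the exchanged set might either end up being a genuine basis of $\M$ or, more subtly, equal to $H$ itself. I would handle this by a sub-case split on whether $B_1\smallsetminus\{a\}\subseteq H$. In the containment sub-case, $|B_1\smallsetminus\{a\}|=\rk(\M)-1=|H|-1$ forces $B_1\smallsetminus\{a\}=H\smallsetminus\{b\}$ for a unique $b\in H\smallsetminus B_1$, and $(B_1\smallsetminus\{a\})\cup\{b\}=H\in\widetilde{\mathscr{B}}$. Otherwise, some element $c\in B_1\smallsetminus\{a\}$ lies outside $H$; since $H$ is a hyperplane, $H\cup\{c\}$ spans $\M$, so $\rk((B_1\smallsetminus\{a\})\cup H)=\rk(\M)>\rk(B_1\smallsetminus\{a\})$, and the rank-augmentation property of $\M$ produces $b\in H\smallsetminus(B_1\smallsetminus\{a\})$ such that $(B_1\smallsetminus\{a\})\cup\{b\}$ is a basis of $\M$. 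A short argument (using $a\notin H$) shows this $b$ cannot belong to $B_1$, so $b\in H\smallsetminus B_1$ as required.

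The main conceptual obstacle is thus recognizing that the exchange in this mixed case has two possible flavors—producing either $H$ itself or an honest basis of $\M$—and knowing which flavor applies based on how $B_1\smallsetminus\{a\}$ interacts with $H$. Once this dichotomy is spotted, all the verifications use only standard matroid facts (the hyperplane property, the circuit property, and rank augmentation), so no heavy machinery is needed.
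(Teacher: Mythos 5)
Your proof is correct, and it supplies an argument where the paper does not: the paper simply cites Oxley's Proposition~1.5.14 rather than reproducing a proof. The preliminary observation that $|H|=\rk(\M)$ (combining the circuit property $\rk(H)=|H|-1$ with the hyperplane property $\rk(H)=\rk(\M)-1$) is exactly the right normalization, and the three-way split on whether $B_1$, $B_2$, or neither equals $H$ is the natural one; note $H\notin\mathscr{B}$ since a circuit is dependent, so the cases are genuinely disjoint. The delicate step you flag, $B_1\in\mathscr{B}$, $B_2=H$, $a\in B_1\smallsetminus H$, is handled correctly: when $B_1\smallsetminus\{a\}\subseteq H$ a counting argument forces the exchanged set to be $H$ itself, and otherwise rank augmentation applied with $A=B_1\smallsetminus\{a\}$ and $B=H$ produces $b\in H$ with $\rk\bigl((B_1\smallsetminus\{a\})\cup\{b\}\bigr)=\rk(\M)$, which (being a set of size $\rk(\M)$ and full rank) is a basis of $\M$; and $b\neq a$ because $a\notin H$ while $b\in H$, so $b\in H\smallsetminus B_1$ as required. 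The symmetric case $B_1=H$ is also handled correctly via independence augmentation from $H\smallsetminus\{a\}$ into $B_2$, again using $a\notin B_2$ to conclude $b\notin H$. This is a clean, self-contained verification of the basis axioms and is the standard argument one would find in Oxley; nothing is missing.
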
 

\begin{proof}
    See \cite[Proposition 1.5.14]{oxley}.
\end{proof}

The operation of declaring a circuit-hyperplane to be a basis is known in the literature by the name of \emph{relaxation}. Many famous matroids arise as a result of applying this operation to another matroid. For instance the Non-Pappus matroid is the result of relaxing a circuit-hyperplane on the Pappus matroid, and analogously the Non-Fano matroid can be obtained by a relaxation of the Fano matroid.

\begin{ex}\label{example:wheels}
    For $k\geq 2$, consider the graph $C_k$ given by a cycle of length $k$ (when $k=2$ this amounts to two parallel edges connecting a pair of points). The cone over $C_k$ gives a graph that we call the \emph{$k$-wheel}. The underlying (graphic) matroid of rank $k$ is denoted by $\W_k$ and will also be called the $k$-wheel. Figure \ref{fig:wheel} depicts the 5-wheel.
    
    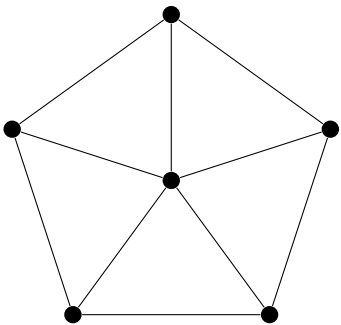
\begin{figure}[h]
        \centering
        \begin{tikzpicture}  
    	[scale=1.7,auto=center,every node/.style={circle, fill=black, inner sep=2.3pt}] 
    	\tikzstyle{edges} = [thick];
    	
        \graph  [empty nodes, clockwise, radius=1em,
        n=9, p=0.3] 
            { subgraph C_n [n=5,m=3,clockwise,radius=2.2cm,name=A]-- mid};
            %\node at  ($(A 1)!.5!(A 4)$) (C){3};
            \foreach \i [count=\xi from 2]  in {1,2,...,5}{
            \node[scale=0.1,fill=black, inner sep=2.3pt] at (A \i){\xi}; }
    \end{tikzpicture}\caption{The wheel graph $\W_5$}\label{fig:wheel}
    \end{figure}
    
    The subset $C_k$ corresponds to a circuit of rank $k-1$ in the matroid $\W_k$. This circuit is also a flat since adding any other edge increases the rank, therefore it is also a hyperplane. The relaxation of this circuit-hyperplane yields a (non-graphic) matroid, which will be called the $k$-\emph{whirl} and denoted $\W^k$. The Kazhdan--Lusztig polynomial and $Z$-polynomial of wheels and whirls were computed in \cite[Theorem 1.1 and 1.6]{wheels}. For example, when $k=5$, the polynomials are:
    \begin{align*}
        P_{\W_5}(t) &= 1+11t+5t^2,\\
        P_{\W^5}(t) &= 1+15t+10t^2,\\
        Z_{\W_5}(t) &= 1 +21t +80t^2 +80t^3 +21t^4 +t^5,\\
        Z_{\W^5}(t) &= 1 +25t +100t^2 +100t^3 +25t^4 +t^5.
    \end{align*}
\end{ex}

\subsection{Structural properties of relaxations}

Our first step towards a proof of our main results is characterizing the flats of a relaxed matroid $\widetilde{\M}$ in terms of the flats of $\M$.

\begin{prop}\label{flatsrelaxation}
	Let $\M=(E,\mathscr{B})$ be a matroid having a circuit-hyperplane $H$. Let $\widetilde{\M}$ be the relaxed matroid. Then, the set of flats $\widetilde{\mathscr{F}}$ of $\widetilde{\M}$ is given by:
	\[\widetilde{\mathscr{F}} = \left(\mathscr{F}\smallsetminus\{H\}\right) \cup \{A\subseteq H : |A| = |H|-1\},\]
	where $\mathscr{F}$ is the set of flats of $\M$.
\end{prop}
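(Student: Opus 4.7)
The plan is to reduce the statement about flats to a comparison of the two rank functions $\rk_{\M}$ and $\rk_{\widetilde{\M}}$. The key intermediate claim is that these functions agree on every subset of $E$ \emph{except} at $H$ itself, where the rank jumps by exactly one. Granted this claim, the flat-defining condition in $\widetilde{\M}$ can differ from that in $\M$ only for subsets $F$ such that either $F=H$ or $F\cup\{e\}=H$ for some $e\notin F$, and the desired description of $\widetilde{\mathscr{F}}$ drops out of a short case check.

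First I would establish the rank comparison. Since every basis of $\M$ remains a basis of $\widetilde{\M}$ and $H$ is the only new basis, the formula $\rk_{\widetilde{\M}}(A) = \max(\rk_{\M}(A), |A\cap H|)$ is immediate. When $A\not\supseteq H$, $A\cap H$ is a proper subset of the circuit $H$, hence independent in $\M$, giving $\rk_{\M}(A)\geq \rk_{\M}(A\cap H)=|A\cap H|$, so the maximum equals $\rk_{\M}(A)$. When $A\supsetneq H$, the hyperplane property already forces $\rk_{\M}(A)=|H|$. Only $A=H$ is genuinely affected, with $\rk_{\widetilde{\M}}(H)=|H|=\rk_{\M}(H)+1$.

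Using this, the case analysis is short. If $F$ is a flat of $\M$ with $F\neq H$, then for every $e\notin F$ one has $F\cup\{e\}\neq H$: otherwise $F$ would be a subset of $H$ of size $|H|-1$, which is independent in $\M$ and whose union with the remaining element of $H$ has rank $|F|$, contradicting the assumption that $F$ is a flat of $\M$. Hence the ranks of $F$ and of $F\cup\{e\}$ coincide in the two matroids, and $F$ remains a flat of $\widetilde{\M}$. The subset $H$ itself fails to be a flat of $\widetilde{\M}$, because adjoining any $e\notin H$ keeps the rank at $|H|$.

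The one point that I expect to require genuine care — the main obstacle — is proving that every $F\subseteq H$ with $|F|=|H|-1$ \emph{is} a flat of $\widetilde{\M}$. For the unique element $e_0\in H\smallsetminus F$ the rank jump is built into the relaxation. For $e\notin H$ one has to verify $\rk_{\M}(F\cup\{e\})=|F|+1$, which I would derive from the identity $\mathrm{cl}_{\M}(F)=H$: the closure is a flat of $\M$ containing $F$, contains $e_0$ because adjoining it does not raise the rank, and is contained in $H$ because $H$ is itself a flat; therefore $e\notin \mathrm{cl}_{\M}(F)$ and $F\cup\{e\}$ is independent. Since these new flats are disjoint from $\mathscr{F}$ (the same argument above shows no such subset is a flat of $\M$), collecting the conclusions of all cases yields exactly $\widetilde{\mathscr{F}}=(\mathscr{F}\smallsetminus\{H\})\cup\{A\subseteq H : |A|=|H|-1\}$.
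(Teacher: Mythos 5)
Your proof is correct and follows the same overall strategy as the paper's: observe that $\rk_{\widetilde{\M}}$ and $\rk_{\M}$ agree on every subset except $H$ (where the rank goes up by one), then analyze the flat condition case by case. Two tactical choices differ, though, and both are worth noting. First, you justify the rank comparison by writing $\rk_{\widetilde{\M}}(A)=\max\bigl(\rk_{\M}(A),|A\cap H|\bigr)$ and checking that the second term is never the strict maximum unless $A=H$; the paper simply asserts the agreement of ranks in one sentence, so your version is more self-contained. Second, for the key step that every $A\subseteq H$ with $|A|=|H|-1$ is a flat of $\widetilde{\M}$, you compute $\mathrm{cl}_{\M}(A)=H$ and conclude that $A\cup\{e\}$ is independent for $e\notin H$, whereas the paper invokes the submodular inequality
\[ \rk(H)+\rk\bigl((H\smallsetminus\{h\})\cup\{e\}\bigr) \geq \rk(H\smallsetminus\{h\})+\rk(H\cup\{e\}) \]
to show $(H\smallsetminus\{h\})\cup\{e\}$ is a basis. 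Both arguments are sound and of comparable length; the closure-based version arguably reads more cleanly since it stays within the language of flats that the statement is already about, while the paper's submodularity argument packages the same information as a one-line inequality. You also handle the case ``$F$ is a flat of $\M$, $F\neq H$'' by showing $F\cup\{e\}=H$ is impossible, whereas the paper absorbs that case by noting the rank jump still makes the flat condition hold; again, both are fine.
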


\begin{proof}
	Notice that the rank function $\widetilde{\rk}$ of $\widetilde{\M}$ coincides with the rank function $\rk$ of $\M$ with the only exception of $\rk(H)+1=\widetilde{\rk}(H)$.
	
	Let $F$ be a flat of $\widetilde{\M}$ that is not a flat of $\M$. Then $\widetilde{\rk}(F\cup \{e\})>\widetilde{\rk}(F)$ for all $e\notin F$. Since $F\neq H$, we have that $\widetilde{\rk}(F)=\rk(F)$. Notice that there exists an $e$ such that $F\cup \{e\}=H$, since otherwise our inequality holds for all $e$ with $\rk$ instead of $\widetilde{\rk}$ and thus contradicts that $F$ is not a flat of $\M$. Then $F\subseteq H$ and $|F|=|H|-1$, as claimed.
	
	The reverse inclusion follows from the fact that all such sets are flats of $\widetilde{\M}$. If $F\neq H$ is a flat of $\M$, then $\rk(F\cup \{e\}) > \rk(F)$ for all $e\notin F$; in particular, by using $\widetilde{\rk}$ instead of $\rk$ this will still be true even if $F\cup \{e\}=H$, because in that case $\widetilde{\rk}(F\cup \{e\}) = 1 + \rk(F\cup \{e\}) > \rk(F) = \widetilde{\rk}(F)$. Also, if $A = H\smallsetminus \{h\}$ for some $h\in H$, then clearly adding $h$ to $A$ will increase its rank in $\widetilde{\M}$, so let us pick an element $e\notin H$. By applying the submodular inequality of the rank and using that $H$ is a circuit and a hyperplane, we obtain:
	    \[ \underbrace{\rk(H)}_{\rk(\M)-1} + \rk((H\smallsetminus \{h\})\cup\{e\}) \geq \underbrace{\rk(H\smallsetminus \{h\})}_{\rk(\M)-1} + \underbrace{\rk(H\cup\{e\})}_{\rk(\M)}.\]
	This gives us that $\rk((H\smallsetminus \{h\})\cup\{e\})=\rk(\M)$ or, in other words, that $(H\smallsetminus\{h\})\cup\{e\}$ is a basis. Also, notice that as $A\cup\{e\} \neq H$, we have: 
        \[\widetilde{\rk}(A\cup\{e\}) = \rk(A\cup \{e\}) = \rk((H\smallsetminus\{h\})\cup\{e\}) = \rk(\M).\]
	Since $\rk(\M) = \widetilde{\rk}(H) = \widetilde{\rk}(A) + 1$, it follows that $A$ is indeed a flat of $\widetilde{\M}$.
\end{proof}

\begin{obs}\label{tuttecharacteristic}
    Observe that, as we mentioned in the first paragraph of the preceding proof, if $\M$ is a matroid of rank $k$ that has a circuit-hyperplane $H$, and $\widetilde{\M}$ is the corresponding relaxation, the rank functions $\operatorname{rk}$ of $\M$ and $\widetilde{\operatorname{rk}}$ of $\widetilde{\M}$ coincide everywhere but in $H$. Therefore, if we consider the Tutte polynomial of $\widetilde{\M}$, we have:
    \begin{align*} 
        T_{\widetilde{\M}}(x,y) &= \sum_{A\subseteq E} (x-1)^{\widetilde{\rk}(E) - \widetilde{\rk}(A)}(y-1)^{|A|-\widetilde{\rk}(A)}\\
        &= (x-1)^{\widetilde{\rk}(E) - \widetilde{\rk}(H)} (y-1)^{|H|-\widetilde{\rk}(H)} + \sum_{\substack{A\subseteq E\\A\neq H}} (x-1)^{\widetilde{\rk}(E) - \widetilde{\rk}(A)}(y-1)^{|A|-\widetilde{\rk}(A)}\\
        &= (x-1)^{k-k} (y-1)^{k - k} + \sum_{\substack{A\subseteq E\\A\neq H}} (x-1)^{\rk(E) - \rk(A)} (y-1)^{|A|-\rk(A)}\\
        &= 1 + T_{\M}(x,y) - (x-1)^{\rk(E) - \rk(H)}(y-1)^{|H|-\rk(H)}\\
        &= 1 + T_{\M}(x,y) - (x-1)^{1}(y-1)^{1}\\
        &= T_{\M}(x,y) - xy + x + y.
    \end{align*}
    In particular, since the characteristic polynomial of a matroid can be obtained by specifying the Tutte polynomial as follows, $\chi_{\M}(t) = (-1)^kT_{\M}(1-t,0)$, we obtain:
        \[ \chi_{\widetilde{\M}}(t) - \chi_{\M}(t) = (-1)^k (1-t),\]
    which gives a relation between the characteristic polynomial of a matroid and that of a relaxation.
\end{obs}

\begin{oss}\label{rank1}
    Assume that $\M$ is a matroid of rank $1$ having a circuit-hyperplane $H$. We must have that $H$ is a singleton. Moreover, since $H$ is a hyperplane, adding any other element to it should increase the rank. In other words, we have that $H = \{h\}$ and $h$ is the \emph{only} loop of $\M$. Conversely, if a matroid $\M$ of rank $1$ has a unique loop $h$, then $H = \{h\}$ is a circuit-hyperplane. The matroids of rank $1$ having one loop are isomorphic to direct sums of the form $\U_{0,1}\oplus \U_{1,n-1}$, and their relaxations are isomorphic to $\U_{1,n}$. Moreover, for matroids of rank strictly greater than $1$, the presence of a circuit-hyperplane implies automatically that the matroid is loopless; this is because if $\M$ has a loop, it belongs to every flat of $\M$, preventing any hyperplane from being a circuit. 
\end{oss}

\subsection{Relaxations in the Kazhdan--Lusztig framework}

Let us now state a result relating all the Kazhdan--Lusztig invariants of a matroid and those of a relaxation.

\begin{teo}\label{main}
    For each integer $k\geq 1$ there exist polynomials $p_k$, $q_k$ and $z_k$ with integer coefficients such that for every matroid $\M$ of rank $k$ having a circuit-hyperplane $H$ the following equalities hold:
        \begin{align*} 
            P_{\widetilde{\M}}(t) &= P_{\M}(t) + p_k(t),\\
            Q_{\widetilde{\M}}(t) &= Q_{\M}(t) + q_k(t),\\
            Z_{\widetilde{\M}}(t) &= Z_{\M}(t) + z_k(t),
        \end{align*}
    where $\widetilde{\M}$ denotes the corresponding relaxation of $\M$ by $H$.
\end{teo}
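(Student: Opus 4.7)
The plan is strong induction on the rank $k$. For $k=1$, by Remark \ref{rank1} the only matroid of rank $1$ carrying a circuit-hyperplane is (up to isomorphism) $\U_{0,1}\oplus\U_{1,n-1}$ whose relaxation is $\U_{1,n}$, and a direct computation yields $p_1=1$, $q_1=1$, $z_1=0$, independent of $n$.

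For the inductive step, fix $k\geq 2$ and assume the polynomials $p_j$, $q_j$, $z_j$ have already been constructed for every $j<k$. I apply the recursions of Theorems \ref{PM} and \ref{QM} to both $\M$ and $\widetilde{\M}$ and subtract. Proposition \ref{flatsrelaxation} groups the flats that arise into four classes: $F=E$; proper flats $F\subsetneq H$ of $\M$; flats $F\in\mathscr{F}(\M)$ with $F\not\subseteq H$ and $F\neq E$; and the exceptional flats, namely $H$ (only in $\mathscr{F}(\M)$) and the $k$ new flats $A$ of the form $H\smallsetminus\{h\}$ (only in $\mathscr{F}(\widetilde{\M})$). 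For the first class, Remark \ref{tuttecharacteristic} gives $\chi_{\widetilde{\M}}-\chi_{\M}=(-1)^k(1-t)$. For the second class, one verifies $\widetilde{\M}^F=\M^F$, while $H\smallsetminus F$ is a circuit-hyperplane of the rank-$(k-\rk(F))$ matroid $\M_F$ whose relaxation is precisely $\widetilde{\M}_F$; by the inductive hypothesis $P_{\widetilde{\M}_F}-P_{\M_F}=p_{k-\rk(F)}$ (and analogously for $Q$, invoking $\chi_{\widetilde{\M}_F}-\chi_{\M_F}=(-1)^{k-\rk(F)}(1-t)$). For the third class the relaxation affects neither restriction nor contraction, so the contribution vanishes. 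For the exceptional flats one identifies $\M^H=\U_{k-1,k}$, $\M_H=\U_{1,n-k}$, $\widetilde{\M}^A=\B_{k-1}$ and $\widetilde{\M}_A=\U_{1,n-k+1}$. Crucially, the flats $F\subsetneq H$ of $\M$ coincide with the proper flats of $\M^H=\U_{k-1,k}$, so they are exactly the size-$j$ subsets of $H$ for $0\leq j\leq k-2$, with $\M^F=\B_j$; both the count $\binom{k}{j}$ and the restriction are independent of $\M$.

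Assembling the pieces, the Kazhdan--Lusztig recursion yields a functional equation
\[
t^k\,p_k(t^{-1})-p_k(t)=T_k(t),
\]
where $T_k$ is an explicit polynomial depending only on $k$. Uniqueness then follows from a degree comparison: if $p$ and $p'$ both satisfy the equation with degree less than $k/2$, their difference $\delta$ satisfies $\delta(t)=t^k\delta(t^{-1})$, which forces $\delta=0$ because the two sides have disjoint supports. Exhibiting any single matroid of rank $k$ with a circuit-hyperplane (for instance $\U_{k-1,k}\oplus\U_{1,1}$, whose relaxation is $\U_{k,k+1}$) provides at least one solution, so the unique solution must equal $P_{\widetilde{\M}}-P_{\M}$ for every $\M$, giving $\M$-independence. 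The same argument, applied to the recursion of Theorem \ref{QM}, produces $q_k$. For $Z$ no functional equation is needed: the identical case analysis applied to the definition of $Z_\M$ yields the explicit formula
\[
z_k(t)=\sum_{j=0}^{k-2}\binom{k}{j}\,t^j\,p_{k-j}(t)+(k-1)\,t^{k-1},
\]
which depends only on $k$ by the assertion just proved for $P$. The hard step is the analysis of the second class: verifying rigorously that $H\smallsetminus F$ is a circuit-hyperplane of $\M_F$ and that $\widetilde{\M}_F$ coincides with the corresponding relaxation, both of which rest on the fact that every proper flat of $\U_{k-1,k}$ is independent (so $|F|=\rk(F)$) together with a direct basis-exchange inspection.
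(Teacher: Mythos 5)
Your proof is correct and follows essentially the same route as the paper's. You induct on the rank $k$, use Proposition \ref{flatsrelaxation} to compare the two lattices of flats, split the contributions into the same four classes (the characteristic-polynomial term, the flats inside $H$ that are handled by induction via the commutation of relaxation with contraction, the flats not contained in $H$ whose contributions cancel, and the exceptional flats $H$ and $H\smallsetminus\{h\}$), and then obtain a functional equation $t^k p_k(t^{-1}) - p_k(t) = T_k(t)$ with $T_k$ depending only on $k$, pinning down $p_k$ uniquely by the degree constraint; the argument for $q_k$ is parallel and the formula for $z_k$ matches the paper's after reindexing. The only cosmetic difference is that the paper handles $k=2$ as an explicit base case before launching the inductive step at $k\geq 3$, whereas you start the inductive step at $k=2$ directly — this is fine, since for $k=2$ the inner sum over proper nonempty flats $F\subsetneq H$ is empty and the functional equation forces $p_2=0$. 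Your closing appeal to a specific witness matroid $\U_{k-1,k}\oplus\U_{1,1}$ is not needed for $\M$-independence (the functional equation already determines $p_k$), but it does no harm and in fact anticipates the paper's Corollary \ref{formulitas}.
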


\begin{proof}
    We shall split the proof into three parts: one for $P_{\M}$, another for $Q_{\M}$ and lastly one for $Z_{\M}$.
    
    Let us prove by induction on $k$ that there exists a unique polynomial $p_k$ with the property of the statement. From Remark \ref{rank1} we already know that $p_1(t) = 1$ does satisfy the property of the statement. Every loopless matroid of rank $2$ has a constant Kazhdan--Lusztig polynomial equal to $1$, hence $p_2(t)=0$ satisfies the property of the statement as well. Now, assume that $k\geq 3$. Using the defining relation for $P_{\M}(t)$, we write 
    \[
    t^kP_{\widetilde{\M}}(t^{-1}) - P_{\widetilde{\M}}(t) = \sum_{\substack{F\in \mathscr{L}(\widetilde{\M})\\F\neq \varnothing}}\chi_{\widetilde{\M}^F}(t)P_{\widetilde{\M}_F}(t)
    \]
    and
    \[
    t^kP_{\M}(t^{-1}) - P_{\M}(t) = \sum_{\substack{F\in\mathscr{L}(\M)\\F\neq \varnothing}}\chi_{\M^F}(t)P_{\M_F}(t).
    \]
    
    Subtracting the second equation from the first we obtain, using Proposition \ref{flatsrelaxation}:
    \begin{align}
    &t^k\left[P_{\widetilde{\M}}(t^{-1}) - P_{\M}(t^{-1})\right] - \left[P_{\widetilde{\M}}(t) - P_{\M}(t) \right] =\nonumber\\
    &\sum_{\substack{A\subseteq H\\|A|=k-1}}\chi_{\widetilde{\M}^A}(t)P_{\widetilde{\M}_A}(t)\\
    &- \chi_{\M^H}(t)P_{\M_H}(t) \\
    &+ \chi_{\widetilde{\M}}(t) - \chi_{\M}(t)\\
    &+ \sum_{\substack{F\in\mathscr{L}(\M)\\ F\neq \varnothing,H,E}} \left(\chi_{\widetilde{\M}^F}(t) P_{\widetilde{\M}_F}(t) - \chi_{\M^F}(t)P_{\M_F}(t)\right).
    \end{align}
    We show that each summand of this expression is independent of $\M$.
    \begin{enumerate}
        \item For all the terms in the first sum, we have $P_{\widetilde{\M}_A}(t) = 1$, since $\widetilde{\M}_A$ is a matroid of rank 1, and $\chi_{\widetilde{\M}^A} = (t-1)^{k-1}$, since $A$ is independent and, thus, $\widetilde{\M}^A \cong \B_{|A|}$. 
        \item Similarly, $P_{\M_H}(t) = 1$ and $\M^H\cong \U_{k-1,k}$, since $H$ is a circuit of rank $k-1$ in $\M$.
        \item By Remark \ref{tuttecharacteristic}, we can write $\chi_{\widetilde{\M}}(t) - \chi_{\M}(t) = (-1)^k(1-t)$.
        \item In the last sum, we observe that if $F\neq H$ and $F\neq E$, then $\widetilde{\M}^F = \M^F$. We then consider two cases.
        \begin{itemize}
            \item[{\tiny $\blacktriangleright$}] If $F\not\subseteq H$, we observe that $\widetilde{\M}_F = \M_F$, thus all the corresponding terms vanish.
            \item[{\tiny $\blacktriangleright$}] If $F\subseteq H$, $\M^F\cong \B_{\rk(F)}$ and $\widetilde{\M}_F = \widetilde{\M_F}$, where $\widetilde{\M_F}$ is the relaxation of $H\smallsetminus F$ in $\M_F$ (here we are using \cite[Proposition 3.3.5]{oxley}). Hence, the sum can be rewritten as
            \[
            \sum_{\varnothing \subsetneq F \subsetneq H} \chi_{\M^F}(t)p_{k-\rk(F)}(t) = \sum_{j=1}^{k-2}\binom{k}{j}(t-1)^jp_{k-j}(t) = \sum_{j=2}^{k-1}\binom{k}{j}(t-1)^{k-j}p_j(t).
            \]
        \end{itemize}
    \end{enumerate} 
    
    By the induction hypothesis we have that the right-hand-side of the preceding equation does not depend on the matroid $\M$ and only depends on the rank $k$. Also, observe that the following becomes a defining equation for $p_k(t)$,
    \[
    t^kp_k(t^{-1}) - p_k(t) = k(t-1)^{k-1} - \chi_{\U_{k-1,k}}(t) + (-1)^k(1-t) + \sum_{j=2}^{k-1}\binom{k}{j}(t-1)^{k-j}p_j(t).
    \]
    Since $p_k$ must have degree strictly smaller than $\frac{k}{2}$, the above equation is satisfied by one and only one polynomial $p_k$. By induction we conclude the proof of the existence and uniqueness of $p_k(t)$ for every $k\geq 1$.
    
    Now let us focus on $Q_{\M}(t)$. When $k=1$, again by Remark \ref{rank1} we have $q_1(t)=1$. On the other hand, for $k\geq 2$, we first write
    \[
    (-t)^kQ_{\widetilde{\M}}(t^{-1}) =  (-1)^kQ_{\widetilde{\M}}(t)
    + \sum_{\substack{F\in\mathscr{L}(\widetilde{\M})\\F\neq E}}(-1)^{\rk(F)}Q_{\widetilde{\M}^F}(t)t^{k-\rk(F)}
    \chi_{\widetilde{\M}_F}(t^{-1}),
    \]
    and a counterpart equality, using $Q_{\M}$ instead of $Q_{\widetilde{\M}}$. Subtracting one equation from the other gives us
    \begin{align*}
        & (-t)^k\left[Q_{\widetilde{\M}}(t^{-1}) - Q_{\M}(t^{-1}) \right] - (-1)^k\left[Q_{\widetilde{\M}}(t) - Q_{\M}(t) \right]
        =\\
        & \sum_{\substack{A\subseteq H \\ |A| = |H| -1}}(-1)^{\rk(A)}Q_{\widetilde{\M}^A}(t)t^{k-\rk(A)}\chi_{\widetilde{\M}_A}(t^{-1})\\
        & - (-1)^{\rk(H)}Q_{\M^H}(t)t^{k-\rk(H)}\chi_{\M_H}(t^{-1})\\
        & + t^k\left[\chi_{\widetilde{\M}}(t^{-1}) - \chi_{\M}(t^{-1}) \right] \\
        & + \sum_{\substack{F\in \mathscr{L}(\M)\\F\neq \varnothing,H,E}}(-1)^{\rk(F)}t^{k-\rk(F)}\left[Q_{\widetilde{\M}^F}(t)\chi_{\widetilde{\M}_F}(t^{-1}) - Q_{\M^F}(t)\chi_{\M_F}(t^{-1})\right].
    \end{align*}
    This permits us to write
    \[
    t^kq_k(t^{-1}) - q_k(t) = (t-1)\left( t^{k-1} +k - Q_{\U_{k-1,k}}(t)+\sum_{j=2}^{k-1}\binom{k}{j}t^{j-1}\right).
    \]
    By an analogous reasoning we conclude the existence and uniqueness of $q_k(t)$ for each $k\geq 1$.
    
    Finally, for the $Z$-polynomial we write
    \[
    Z_{\widetilde{\M}}(t) = \sum_{\substack{F\in\mathscr{L}(\widetilde{\M})}}t^{\rk(F)}P_{\widetilde{\M}_F}(t),
    \]
    and similarly for $Z_{\M}(t)$. Then,
    \begin{align*}
    Z_{\widetilde{\M}}(t) - Z_{\M}(t) &=\sum_{\substack{A\subseteq H\\ |A|=|H|-1}}t^{k-1}P_{\widetilde{\M}_A}(t) - t^{k -1}P_{\widetilde{\M}_H}(t) + \sum_{F\in \mathscr{L}(\M)} t^{\rk(F)}\left(P_{\widetilde{\M}_F}(t) - P_{\M_F}(t) \right)\\
    &= (k-1)t^{k-1} +  \sum_{F\in \mathscr{L}(\M)} t^{\rk(F)}\left(P_{\widetilde{\M}_F}(t) - P_{\M_F}(t) \right).
    \end{align*}
    Considering separately the two cases $F\subseteq H$ and $F\nsubseteq H$ as we did for $P_{\M}$, this can be further simplified to get
    \begin{equation*}\label{zeq}
    z_k(t) = (k-1)t^{k-1} + \sum_{j=2}^k \binom{k}{j}t^{k-j}p_j(t),\end{equation*}
    which completes the proof.
\end{proof}

Now that we know that each of these polynomials is independent of the considered matroids, we will use this fact to give an explicit formula for each of them. To this end, we will consider a case in which both the matroid $\M$ and its relaxation $\widetilde{\M}$ have nice Kazhdan--Lusztig polynomials.

\begin{prop}\label{minimal}
    Let $\M$ be the matroid of rank $k$ and cardinality $n\geq k+1$ given by:
        \[ \M =  \U_{k-1,k}\oplus \U_{1,n-k}.\]
    Then, $\M$ has a circuit-hyperplane $H$. The corresponding relaxation $\widetilde{\M}$, denoted by $\T_{k,n}$, is such that its simplification is isomorphic to $\U_{k,k+1}$.
\end{prop}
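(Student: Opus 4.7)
The plan is to exhibit an explicit circuit-hyperplane of $\M$, describe the basis structure of the relaxation $\widetilde{\M}$, and then identify the parallel classes in order to carry out the simplification.

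First, decompose the ground set as $E = E_1 \sqcup E_2$, where $E_1$ has $k$ elements and supports the $\U_{k-1,k}$ factor, and $E_2$ has $n-k$ elements and supports the $\U_{1,n-k}$ factor. I claim that $H := E_1$ is a circuit-hyperplane. Indeed, $H$ is the unique circuit of the $\U_{k-1,k}$ summand, and circuits of a direct sum are exactly the circuits of the individual summands, so $H$ is a circuit of $\M$. For the hyperplane condition, one computes $\rk_{\M}(H) = \rk_{\U_{k-1,k}}(E_1) + \rk_{\U_{1,n-k}}(\varnothing) = (k-1) + 0 = k-1$; and for any $e \in E_2$, $\rk_{\M}(H \cup \{e\}) = (k-1) + 1 = k$, confirming both that $H$ is a flat and that its rank is one less than the rank of $\M$.

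Next, I describe $\widetilde{\M}$ explicitly. By definition, the bases of $\widetilde{\M}$ are the bases of $\M$ together with $H$, namely the sets of the form $B' \cup \{e\}$ with $B' \subseteq E_1$, $|B'|=k-1$, and $e \in E_2$, plus $H=E_1$ itself. This immediately shows there are no loops in $\widetilde{\M}$: every element of $E_1$ lies in the basis $H$, and every $e \in E_2$ lies in some basis $B'\cup\{e\}$. On the other hand, for any two distinct $e, e' \in E_2$ the set $\{e,e'\}$ is a circuit of $\M$ (it is a circuit of the $\U_{1,n-k}$ summand), and since by Remark \ref{tuttecharacteristic} the rank function only changes at $H$, this pair remains a circuit in $\widetilde{\M}$. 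Thus all elements of $E_2$ are pairwise parallel. Elements of $E_1$, by contrast, admit no parallels: any $2$-subset of $E_1$ lies inside $H$ which is independent in $\widetilde{\M}$.

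Finally, to perform the simplification I replace the parallel class $E_2$ by a single representative element $e_0$, obtaining a matroid on the $(k+1)$-element ground set $E_1 \cup \{e_0\}$, still of rank $k$. Its bases are $H = E_1$ together with the $k$ sets of the form $(E_1 \setminus \{x\}) \cup \{e_0\}$ for $x \in E_1$. These are exactly the $\binom{k+1}{k} = k+1$ subsets of size $k$ of $E_1 \cup \{e_0\}$, which is precisely the basis system of $\U_{k,k+1}$. No step here presents any real obstacle; the only mild care needed is to confirm that relaxing $H$ does not accidentally create or destroy parallel pairs outside $H$, which follows from the fact that the rank function is altered only on $H$ itself.
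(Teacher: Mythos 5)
Your proof is correct and takes a genuinely different route from the paper's. The paper argues at the level of the lattice of flats: it applies Proposition \ref{flatsrelaxation} to enumerate all flats of $\widetilde{\M}$ and then exhibits an explicit order-preserving bijection between $\mathscr{L}(\widetilde{\M})$ and $\mathscr{L}(\U_{k,k+1})$. You instead work directly with bases, loops, and parallel classes, which is more elementary and makes the simplification visible without any lattice machinery. There is, however, one small gap worth closing: after showing that all of $E_2$ is pairwise parallel and that $E_1$ contains no internal parallel pairs, you also need to rule out a cross-parallel pair $\{x,e\}$ with $x\in E_1$ and $e\in E_2$ — otherwise the parallel class containing $E_2$ could be strictly larger than $E_2$, and the simplified ground set would have fewer than $k+1$ elements. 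The check is immediate from facts you already invoke: for $k\geq 2$ such a pair is not a circuit of $\M$ because circuits of a direct sum lie inside a single summand, and since $\{x,e\}\neq H$ its rank is unchanged under relaxation, so it is not a circuit of $\widetilde{\M}$ either. As a side remark applying equally to your proof and the paper's: the statement is implicitly for $k\geq 2$; when $k=1$ the sole element of $E_1$ is a loop of $\M$, the relaxation $\T_{1,n}$ equals $\U_{1,n}$, and its simplification is $\U_{1,1}$ rather than $\U_{1,2}$ (harmless downstream since $\mathscr{L}(\U_{1,n})\cong\mathscr{L}(\U_{1,1})$, but your parallel-class bookkeeping does silently use $k\geq 2$).
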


\begin{proof}
    Since $\M = \U_{k-1,k}\oplus \U_{1,n-k}$, it is clear that $\rk(\M) = k$ and that the cardinality of $\M$ is $n$. Let us suppose that the ground set of $\M$ consists of the set $E=\{x_1,\ldots,x_n\}$, and let us assume that $E_1=\{x_1,\ldots,x_k\}$ is the ground set of $\U_{k-1,k}$ and $E_2=\{x_{k+1},\ldots,x_n\}$ is the ground set of $\U_{1,n-k}$. 
    
    Let us describe all the flats of $\M$ first. By \cite[p. 125]{oxley} (which characterizes the flats of a direct sum of matroids), we have that the flats of $\M$ are exactly the subsets $F\subseteq E$ such that $F\cap E_1$ is a flat in $\U_{k-1,k}$ and $F\cap E_2$ is a flat in $\U_{1,n-k}$. Thus, $F$ is a flat of $\M$ if and only if we have:
        \[ |F\cap E_1| \in \{0, 1,\ldots, k-2, k\} \;\;\text{ and }\;\; |F\cap E_2| \in \{0, n-k\}.\]

    We claim that:
        \[ H = \{x_1,\ldots,x_k\} = E_1\]
    is a circuit-hyperplane in $\M$. Indeed:
    \begin{itemize}
        \item $H$ is a flat, because of the above conditions.
        \item $H$ has rank $k-1$, since $\rk_{\M} H = \rk_{\U_{k-1,k}} H = k-1$.
        \item $H$ is a circuit, because the removal of any member of $H$ yields an independent subset of $\U_{k-1,k}$ which will of course be an independent subset of the matroid $\M$.
    \end{itemize}
    Now that we know that $H$ is a circuit-hyperplane of $\M$ we can consider the relaxation $\widetilde{\M}$ of $\M$ by $H$. Also, we can use Proposition \ref{flatsrelaxation} to characterize all the flats of $\widetilde{\M}$. In fact, we see that $\widetilde{F} \in \widetilde{\mathscr{F}}$ if and only if:
        \[
        |\widetilde{F}\cap E_1| \in \{0, 1,\ldots, k-2,k-1\} \;\;\text{ and }\;\; |\widetilde{F}\cap E_2| = 0, \]
        \[
        \text{ or }|\widetilde{F}\cap E_1| \in \{0, 1,\ldots, k-2,k\} \;\;\text{ and }\;\; |\widetilde{F}\cap E_2| = n-k. \]
       
    Notice that the set $E_2$ is an atom of this lattice of flats. The remaining $k$ atoms are the elements of $E_1$. Moreover, if we label the elements of $E_1$ as $\overline{1},\ldots,\overline{k}$ and label the atom $E_2$ as $\overline{k+1}$, we can construct an order-preserving bijection from the flats of $\widetilde{\M}$ to the family of subsets of $\{\overline{1},\ldots,\overline{k+1}\}$ having cardinality $\{0,\ldots,k-1,k+1\}$. The latter is just isomorphic to the lattice of flats of $\U_{k,k+1}$ which implies that the simplification of $\widetilde{\M}$ is isomorphic to $\U_{k,k+1}$, as desired.
\end{proof}

A useful property of the (inverse) Kazhdan--Lusztig polynomials and the $Z$-polynomials is that they behave particularly well under direct sums of matroids. Namely, in \cite[Proposition 2.7]{eliasproudfoot} Elias et al. proved that
    \begin{equation}\label{eq:direct-sum} P_{\M_1\oplus\M_2}(t) = P_{\M_1}(t) \cdot P_{\M_2}(t).\end{equation}
An analogous statement for the inverse Kazhdan--Lusztig polynomial of a direct sum of matroids was proved in \cite[Lemma 3.1]{gaoinverse}. For the $Z$-polynomial we have as well
    \begin{equation}\label{eq:direct-sum-zeta} Z_{\M_1\oplus\M_2}(t) = Z_{\M_1}(t) \cdot Z_{\M_2}(t).\end{equation}
The proof follows directly from the definition of the $Z$-polynomial and the fact that the contraction of a direct sum of matroids $\M_1$ and $\M_2$ with ground sets $E_1$ and $E_2$, namely, $(\M_1\oplus\M_2)_F$ is exactly the direct sum $(\M_1)_{E_1\cap F} \oplus (\M_2)_{E_2\cap F}$, see \cite[Exercise 4.2.19]{oxley}. This, together with the property of equation \eqref{eq:direct-sum} yields a straightforward proof of \eqref{eq:direct-sum-zeta}.

\begin{cor}\label{formulitas}
    For each $k\geq 1$ we have:
    \begin{enumerate}[(a)]
        \item $p_k(t) = P_{\U_{k,k+1}}(t) - P_{\U_{k-1,k}}(t)$.
        \item $q_k(t) = Q_{\U_{k,k+1}}(t) - Q_{\U_{k-1,k}}(t)$.
        \item $z_k(t) = Z_{\U_{k,k+1}}(t) - (1+t) Z_{\U_{k-1,k}}(t)$.
    \end{enumerate}
\end{cor}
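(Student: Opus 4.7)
The plan is to apply Theorem \ref{main} to the explicit family of matroids produced by Proposition \ref{minimal}. Fix $k\geq 1$ and any $n\geq k+1$, and set $\M = \U_{k-1,k}\oplus \U_{1,n-k}$, with its distinguished circuit-hyperplane $H=E_1$ and relaxation $\widetilde{\M}$. Since Theorem \ref{main} guarantees that $p_k$, $q_k$ and $z_k$ depend only on the rank $k$ and not on the particular matroid $\M$ chosen, it suffices to compute $P_{\M}, Q_{\M}, Z_{\M}$ and $P_{\widetilde{\M}}, Q_{\widetilde{\M}}, Z_{\widetilde{\M}}$ in closed form for this one pair and then isolate $p_k$, $q_k$, $z_k$ by subtraction.

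The first step is to compute the invariants of $\M$ via direct-sum multiplicativity. Since $\U_{1,n-k}$ has rank one, $P_{\U_{1,n-k}}(t) = Q_{\U_{1,n-k}}(t) = 1$; and a direct evaluation of the defining sum of $Z_{\U_{1,n-k}}$ on its two-element lattice of flats gives $Z_{\U_{1,n-k}}(t) = 1+t$. Applying \eqref{eq:direct-sum}, the analogous multiplicativity for $Q_{\M}$ from \cite{gaoinverse}, and \eqref{eq:direct-sum-zeta} then yields
\[
P_{\M}(t) = P_{\U_{k-1,k}}(t), \qquad Q_{\M}(t) = Q_{\U_{k-1,k}}(t), \qquad Z_{\M}(t) = (1+t)\,Z_{\U_{k-1,k}}(t).
\]

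The second step is to compute the invariants of $\widetilde{\M}$ using the fact that $P$, $Q$ and $Z$ are invariants of the lattice of flats. This is visible from the defining recurrences in Theorems \ref{PM} and \ref{QM} together with the definition of $Z_{\M}$: they involve only ranks of flats, characteristic polynomials of intervals of $\mathscr{L}(\M)$, and the same invariants of the contractions $\M_F$ (which are themselves determined by the upper intervals $[F,\widehat{1}]$). Proposition \ref{minimal} provides the crucial input here: the simplification of $\widetilde{\M}$ is isomorphic to $\U_{k,k+1}$, so the lattices of flats of $\widetilde{\M}$ and $\U_{k,k+1}$ are isomorphic, giving
\[
P_{\widetilde{\M}} = P_{\U_{k,k+1}}, \qquad Q_{\widetilde{\M}} = Q_{\U_{k,k+1}}, \qquad Z_{\widetilde{\M}} = Z_{\U_{k,k+1}}.
\]

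Subtracting these identities from those of the first step, and using $p_k = P_{\widetilde{\M}} - P_{\M}$, $q_k = Q_{\widetilde{\M}} - Q_{\M}$, $z_k = Z_{\widetilde{\M}} - Z_{\M}$ from Theorem \ref{main}, immediately delivers parts (a), (b) and (c). I expect no serious obstacle: the main points requiring care are the lattice-of-flats invariance of the three polynomials (so that simplification can be freely invoked) and the computation $Z_{\U_{1,n-k}}(t) = 1+t$, which is what produces the extra factor $(1+t)$ in part (c).
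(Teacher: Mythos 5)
Your proof is correct and takes essentially the same route as the paper: apply Theorem \ref{main} to $\M = \U_{k-1,k}\oplus\U_{1,n-k}$, use multiplicativity of $P$, $Q$, $Z$ under direct sums together with $P_{\U_{1,n-k}}=Q_{\U_{1,n-k}}=1$ and $Z_{\U_{1,n-k}}=1+t$, and identify $\widetilde{\M}$ with $\U_{k,k+1}$ via the lattice-of-flats isomorphism from Proposition \ref{minimal}. The only (inessential) difference is that the paper immediately specializes to $n=k+1$, i.e.\ $\M=\U_{k-1,k}\oplus\U_{1,1}$, whereas you keep $n\geq k+1$ general; since the invariants of $\U_{1,n-k}$ do not depend on $n-k$, both choices give the same computation.
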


\begin{proof}
    Let us call $\M = \U_{k-1,k}\oplus \U_{1,1}$ (this corresponds to taking $n = k +1$ in the last proposition) and $\widetilde{\M}$ its relaxation. Since $\widetilde{\M}$ and $\U_{k,k+1}$ have isomorphic lattices of flats, we know that $P_{\widetilde{\M}}(t) = P_{\U_{k,k+1}}(t)$. Also:
    \begin{align*}
        p_k(t) &= P_{\widetilde{\M}}(t) - P_{\M}(t)\\
        &= P_{\U_{k,k+1}}(t) - P_{\U_{k-1,k}}(t)\cdot P_{\U_{1,1}}(t)\\
        &= P_{\U_{k,k+1}}(t) - P_{\U_{k-1,k}}(t),
    \end{align*}
    where we used the property of equation \eqref{eq:direct-sum} and the fact that $P_{\U_{1,1}}(t) = 1$. An identical proof but using $Q$ instead of $P$ shows (b). In the case of the $Z$-polynomial, we observe that $Z_{\U_{1,1}}(t) = 1+t$.
\end{proof}

\begin{ej}
    As a simple application we find that our results are consistent with the ones found by Lu et al. in \cite[Theorem 1.1]{wheels} for wheel matroids and whirl matroids (cf. Example \ref{example:wheels}). Since the wheel matroid $\W_k$ is a matroid of rank $k$, using Theorem \ref{main}, we obtain that:
        \begin{align*} 
            p_k(t) &= P_{\W^k}(t) - P_{\W_k}(t),\\
            q_k(t) &= Q_{\W^k}(t) - Q_{\W_k}(t),\\
            z_k(t) &= Z_{\W^k}(t) - Z_{\W_k}(t).
        \end{align*}
    It is possible to prove with a direct computation that $P_{\W^k}(t) - P_{\W_k}(t)$ does indeed coincide with $P_{\U_{k,k+1}}(t) - P_{\U_{k-1,k}}(t)$, as one should expect.
\end{ej}

When the matroid $\M$ under consideration is a uniform matroid of the form $\U_{k,k+1}$, there exist formulas for $P_{\M}$ and $Q_{\M}$; see \cite[Theorem 1.2(1)]{proudfoot2015} and \cite[Theorem 3.3]{gaoinverse} respectively. Also, for wheel and whirl matroids (see Example \ref{example:wheels}), in \cite[Theorem 1.6]{wheels} Lu et al. obtained simple formulas for the $Z$-polynomial. Using all these results, we can deduce many properties about our polynomials $p_k$, $q_k$ and $z_k$.  Whenever we have a polynomial $f(t)$, we will denote by $[t^i]f(t)$ the coefficient of $t^i$ in $f(t)$.

\begin{teo}\label{positivity}
    For each $k\geq 1$, we have that the coefficients of $p_k$, $q_k$ and $z_k$ are all non-negative. Moreover, the following statements hold:
    \[\deg p_k = \deg q_k = \left\lfloor\frac{k-1}{2}\right\rfloor\]
    and 
    \[\deg z_k = k-1.\] 
\end{teo}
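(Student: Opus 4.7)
The plan is to invoke Corollary \ref{formulitas} to replace $p_k$, $q_k$, $z_k$ by concrete differences of Kazhdan--Lusztig, inverse Kazhdan--Lusztig, and $Z$-polynomials of the uniform matroids $\U_{k,k+1}$ and $\U_{k-1,k}$, for which closed-form expressions are available from the references cited immediately before the theorem statement (Proudfoot et al.\ for $P$, Gao--Xie for $Q$, and Lu et al.\ for $Z$, together with the direct-sum behaviour of the three invariants).

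\emph{Degrees of $p_k$ and $q_k$.} Since $\U_{k,n}$ is non-degenerate for every $n \geq k+1$, the bound $\deg P_{\M} < \operatorname{rk}(\M)/2$ from Theorem \ref{PM}(2) gives $\deg P_{\U_{k,k+1}} = \lfloor (k-1)/2 \rfloor$ and $\deg P_{\U_{k-1,k}} = \lfloor (k-2)/2 \rfloor$. When $k$ is odd the first strictly exceeds the second and the conclusion is immediate; when $k$ is even the two degrees coincide and one has to read off the leading coefficients from the closed formula to see that they are unequal, which should amount to a short binomial comparison. The argument for $\deg q_k$ is parallel, using the Gao--Xie formula for $Q_{\U_{k,n}}$.

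\emph{Non-negativity of $p_k$ and $q_k$.} Working coefficient-wise, the task becomes to verify $[t^i] P_{\U_{k,k+1}}(t) \geq [t^i] P_{\U_{k-1,k}}(t)$ and the analogous inequality for $Q$, for every $i$. Substituting the closed formulas on each side reduces both statements to elementary inequalities between products of binomial coefficients. The hardest step of the whole proof will be the comparison for $q_k$: the Gao--Xie formula has a more intricate summation structure than the Proudfoot--Wakefield--Young one, and the required monotonicity in $(k,n)$ may demand a careful rearrangement or a sign-reversing pairing, whereas the comparison for $p_k$ should follow almost at sight.

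\emph{Non-negativity and degree of $z_k$.} Rather than manipulating $Z_{\U_{k,k+1}}$ and $(1+t)Z_{\U_{k-1,k}}$ directly, I would use the identity
\[ z_k(t) = (k-1)\,t^{k-1} + \sum_{j=2}^{k}\binom{k}{j}\, t^{k-j}\, p_j(t), \]
which was already extracted during the proof of Theorem \ref{main}. The non-negativity of $p_2,\dots,p_k$ (just established) immediately gives the non-negativity of $z_k$ by induction on $k$. For the degree, each summand $\binom{k}{j}t^{k-j}p_j(t)$ with $j\geq 2$ has degree at most $k-j+\lfloor(j-1)/2\rfloor \leq k-2$, so only the isolated monomial $(k-1)t^{k-1}$ contributes in degree $k-1$, giving $\deg z_k = k-1$ for $k\geq 2$; the case $k=1$ is the expected degenerate boundary, where $z_1\equiv 0$ and the statement holds vacuously.
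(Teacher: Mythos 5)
Your plan is correct, and for $p_k$ and $q_k$ it is essentially identical to the paper's proof: both substitute the Proudfoot--Wakefield--Young formula for $[t^i]P_{\U_{k,k+1}}$ and the Gao--Xie formula for $[t^i]Q_{\U_{k,k+1}}$, take the difference, and reduce non-negativity to a ratio of binomials being $\geq 1$. (One small reassurance: the $q_k$ comparison you flag as potentially hard turns out to be the easier of the two---both reduce to a two-factor ratio that is manifestly $\geq 1$---so no sign-reversing involution is needed. One small caveat: for $k=2$ the two leading coefficients are \emph{equal}, $p_2 \equiv 0$, so your phrase ``to see that they are unequal'' should be restricted to even $k \geq 4$; the paper's own statement is also imprecise at this boundary.)

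Where you genuinely diverge from the paper is in the treatment of $z_k$. The paper invokes the explicit $Z$-polynomial formulas for the wheel $\W_k$ and whirl $\W^k$ from Lu--Xie--Yang and subtracts, arriving at the closed form $z_k(t) = \sum_{j=1}^{k-1}\tfrac{2}{k}\binom{k}{j+1}\binom{k}{j-1}\,t^j$, from which non-negativity and $\deg z_k = k-1$ are read off directly. You instead recycle the recurrence
\[ z_k(t) = (k-1)\,t^{k-1} + \sum_{j=2}^{k}\binom{k}{j}\,t^{k-j}\,p_j(t) \]
already extracted in the proof of Theorem~\ref{main}, deduce non-negativity immediately from that of $p_2,\dots,p_k$, and get the degree by noting that each term in the sum has degree at most $k-j+\lfloor(j-1)/2\rfloor \leq k-2$, so the isolated monomial $(k-1)t^{k-1}$ controls the top for $k\geq 2$ (with $z_1\equiv 0$ handled separately, as the paper also does). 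Your route is more self-contained and elementary: it avoids importing the wheel/whirl $Z$-polynomial results and makes the $z_k$ part a near-trivial corollary of the $p_k$ part. The cost is that you do not obtain the explicit closed formula for $z_k$, which the paper records and uses to identify the coefficient array with OEIS A145596; if that formula is wanted, the wheel/whirl computation (or an equivalent simplification of the recurrence) is still needed. One tiny stylistic nit: no induction on $k$ is actually required for the non-negativity of $z_k$ once the $p_j$ are known non-negative---it is a one-line observation.
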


\begin{proof}
    We will consider each family of polynomials separately.
    \begin{itemize}
        \item By \cite[Theorem 1.2(1)]{proudfoot2015}, we know that $[t^i]P_{\U_{k,k+1}}$ is equal to $\frac{1}{i+1}\binom{k-i-1}{i}\binom{k+1}{i}$ for each $0\leq i \leq \lfloor\frac{k-1}{2}\rfloor$. When $i\in \left\{0,\ldots, \lfloor\frac{k-2}{2}\rfloor\right\}$ we can compute:
        \[    [t^i]p_k(t) = \frac{1}{i+1}\binom{k-i-1}{i}\binom{k+1}{i} - \frac{1}{i+1}\binom{k-i-2}{i}\binom{k}{i}.
        \]
        To conclude the non-negativity of all the coefficients of $p_k$, it suffices to show that 
        \[ \binom{k-i-1}{i}\binom{k+1}{i} \geq \binom{k-i-2}{i}\binom{k}{i}.\]
        However, expanding the binomial coefficients as quotients of factorials, this is just equivalent to proving that
        \[ \frac{k-i-1}{k-2i-1} \cdot \frac{k+1}{k+1-i} \geq 1,\]
        which is clear, since both factors on the left are greater or equal to $1$. Moreover, the above argument shows that the coefficients of degree $1,\ldots, \lfloor\frac{k-2}{2}\rfloor$ are strictly positive, and that the constant term is zero when $k > 1$. Now, if $k$ is even, we have that $\lfloor\frac{k-1}{2}\rfloor = \lfloor\frac{k-2}{2}\rfloor$, so we conclude that $\deg p_k = \lfloor\frac{k-1}{2}\rfloor$, and if $k$ is odd, then $\deg P_{\U_{k,k+1}} > \deg P_{\U_{k-1,k}}$, so that we clearly have $\deg p_k = \deg P_{\U_{k,k+1}} = \lfloor\frac{k-1}{2}\rfloor$, as desired.
         
        \item By \cite[Theorem 3.3]{gaoinverse}, we know that the coefficient $[t^i]Q_{\U_{k,k+1}}(t)$ is given by:
            \[ [t^i] Q_{\U_{k,k+1}}(t) = (k+1)\binom{k}{i}\cdot \frac{k-2i}{(i+1)(k+1-i)}.\]
        Notice that for $0\leq i \leq \lfloor\frac{k-2}{2}\rfloor$, we have that:
            \[ [t^i] q_k = (k+1)\binom{k}{i}\frac{k-2i}{(i+1)(k+1-i)} - k \binom{k-1}{i}\frac{k-1-2i}{(i+1)(k-i)},\]
        and we want to show that:
            \[ (k+1)\binom{k}{i}\frac{k-2i}{(i+1)(k+1-i)} \geq k \binom{k-1}{i}\cdot \frac{k-1-2i}{(i+1)(k-i)},\]
        which, expanding as quotients of binomial coefficients, is equivalent to:
            \[ \frac{k+1}{k+1-i}\cdot \frac{k-2i}{k-2i-1} \geq 1.\]
        which is true because both fractions are trivially greater or equal than $1$. Moreover, since the second fraction is strictly greater than $1$, we actually have that all the coefficients of $q_k$ are strictly positive. The same considerations regarding the parity of $k$ yield the conclusion about the degree of $q_k$ in an analogous fashion to what was said for $p_k$ before.
        \item The statement is trivially true for $k=1$ as actually one has $z_1(t)=0$. By \cite[Theorem 1.6]{wheels} we know that for $k\geq 2$, 
            \begin{align*}
                Z_{\W^k}(t) &= \sum_{j=0}^k \binom{k}{j}^2 t^j\\
                Z_{\W_k}(t) &= \sum_{j=0}^k\left(\binom{k}{j}^2 - \frac{2}{k}\binom{k}{j+1}\binom{k}{j-1}\right) t^j.
            \end{align*}
        In Example \ref{example:wheels} we proved that $\W^k$ is a relaxation of $\W_k$, so that the polynomial $z_k$ can be obtained by considering the difference of the above two expressions, namely
        \[ z_k(t) =  \sum_{j=1}^{k-1}\frac{2}{k}\binom{k}{j+1}\binom{k}{j-1} t^j,\]
        which has degree $k-1$ and non-negative coefficients. Alternatively, one can use the formulas obtained in \cite[Proposition 4.9]{proudfootzeta} to obtain the exact same expression as above.
            \qedhere
    \end{itemize}
\end{proof}

\begin{table}
	\parbox{.45\linewidth}{\begin{tabular}{>{$}l<{$}|*{6}{c}}
		\multicolumn{1}{l}{$k$} &&&&&\\\cline{1-1}
		1 &1&&&&&\\
		2 &0&&&&&\\
		3 &0&2&&&&\\
		4 &0&3&&&&\\
		5 &0&4&5&&&\\
		6 &0&5&16&&&\\
		7 &0&6&35&14&&\\
		8 &0&7&64&70&&\\
		9 &0&8&105&216&42&\\
		10&0&9&160&525&288&\\
		11&0&10&231&1100&1155&132\\
		12&0&11&320&2079&3520&1155\\\hline
		\multicolumn{1}{l}{} &0&1&2&3&4&5\\\cline{2-7}
		\multicolumn{1}{l}{} &\multicolumn{6}{c}{$i$}
	\end{tabular}
	\caption{$[t^i]p_k(t)$}\label{tablas}}
	\quad\quad\quad
	\parbox{.45\linewidth}{\begin{tabular}{>{$}l<{$}|*{6}{c}}
		\multicolumn{1}{l}{$k$} &&&&&\\\cline{1-1} 
		1 &1&&&&&\\
		2 &1&&&&&\\
		3 &1&2&&&&\\
		4 &1&3&&&&\\
		5 &1&4&5&&&\\
		6 &1&5&9&&&\\
		7 &1&6&14&14&&\\
		8 &1&7&20&28&&\\
		9 &1&8&27&48&42&\\
		10&1&9&35&75&90&\\
		11&1&10&44&110&165&132\\
		12&1&11&54&154&275&297\\\hline
		\multicolumn{1}{l}{} &0&1&2&3&4&5\\\cline{2-7}
		\multicolumn{1}{l}{} &\multicolumn{6}{c}{$i$}
	\end{tabular}
	\caption{$[t^i]q_k(t)$}\label{tablas2}}
	\quad\quad\quad
	\parbox{.45\linewidth}{\begin{tabular}{>{$}l<{$}|*{10}{c}}
		\multicolumn{1}{l}{$k$} &&&&&\\\cline{1-1}
		1 &0&&&&&\\
		2 &0&1&&&&\\
		3 &0&2&2&&&\\
		4 &0&3&8&3&&\\
		5 &0&4&20&20&4&\\
		6 &0&5&40&75&40&5\\
		7 &0&6&70&210&210&70&6\\
		8 &0&7&112&490&784&490&112&7\\
		9 &0&8&168&1008&2352&2352&1008&168&8\\
		10&0&9&240&1890&6048&8820&6048&1890&240&9\\\hline
		\multicolumn{1}{l}{} &0&1&2&3&4&5&6&7&8&9\\\cline{2-11}
		\multicolumn{1}{l}{} &\multicolumn{10}{c}{$i$}
	\end{tabular}
	\caption{$[t^i]z_k(t)$}\label{tablas3}}
\end{table}

\begin{obs}
    In Table \ref{tablas} we can see the coefficients of $p_k$ for small values of $k$. In Table \ref{tablas2} we see the corresponding coefficients for $q_k$. In Table \ref{tablas3} we see the coefficients of $z_k$. According to the \emph{Online Encyclopedia of Integer Sequences} (OEIS) \cite{sloane}, the coefficients of $q_k$ coincide with the sequence \href{http://oeis.org/A008315}{A008315}, and the coefficients of $z_k$ coincide with the sequence \href{http://oeis.org/A145596}{A145596}.
\end{obs}

\subsection{A polyhedral interpretation}

We end this section with a brief and informal discussion of what is going on at the level of matroid polytopes. Recall that if $\M$ is a matroid with ground set $E=\{1,\ldots,n\}$ and set of bases $\mathscr{B}$, its \emph{base polytope} is defined as the convex hull in $\mathbb{R}^n$ of the indicator vectors of all the bases of $\M$. In other words, if for each basis $B\in\mathscr{B}$ we denote $e_B = \sum_{i\in B} e_i$, where $e_i$ is the $i$-th canonical vector of $\mathbb{R}^n$, then the base polytope of $\M$ is defined as
    \[ \mathscr{P}(\M) = \text{convex hull} \{e_B: B\in \mathscr{B}(\M)\}.\]

In \cite[Theorems 8.8 and 8.9]{ardila}, Ardila and Sanchez proved that the (inverse) Kazhdan--Lusztig polynomial of a matroid is a ``valuative invariant'' for matroidal subdivision of base polytopes. This means that if one picks a subdivision of $\mathscr{P}(\M)$ into subpolytopes that are themselves the base polytopes of suitable matroids, then one can compute the (inverse) Kazhdan--Lusztig polynomial of the original matroid by first computing it for the smaller pieces and then using an inclusion-exclusion argument. For a precise account and thorough discussion of the properties of valuative invariants of (poly)matroids we refer the reader to \cite{derksenfink} and \cite{ardila-fink-rincon}. 

On the other hand, in \cite[Theorem 5.4]{ferroni2} Ferroni proved that the circuit-hyperplane relaxation consists geometrically of stacking the polytope of a ``minimal matroid'' on a facet of the polytope. In other words, using the notation that we have established so far, we have that for every matroid $\M$ of rank $k$, cardinality $n$, having a circuit-hyperplane $H$, there is a decomposition of the polytope:
    \[ \mathscr{P}(\widetilde{\M}) = \mathscr{P}(\M) \cup \mathscr{P}(\T_{k,n}),\]
where $\T_{k,n}$ is the relaxed matroid of Proposition \ref{minimal} (i.e. the relaxation of $\U_{k-1,k}\oplus \U_{1,n-k}$).

If we denote the matroid with base polytope $\mathscr{P}(\M)\cap \mathscr{P}(\T_{k,n})$ by $\N$, what we have proved in the present article is that:
    \[ P_{\widetilde{\M}}(t) = P_{\M}(t) + P_{\T_{k,n}}(t) - P_\N(t),\]
which is something that we might have expected from the valuative property of the Kazhdan--Lusztig polynomials on the polytopes. Observe that by Proposition \ref{minimal} what we have is that $P_{\T_{k,n}}(t) = P_{\U_{k,k+1}}(t)$. Also, from the proof of Theorem 5.4 in \cite{ferroni2} one can deduce that $\N\cong \U_{k-1, k}\oplus \U_{1,n-k}$, so that $P_\N(t) = P_{\U_{k-1,k}}(t)$, and we recover what we had initially expected:
    \[ P_{\widetilde{\M}}(t) - P_{\M}(t) = p_k(t),\]
and the polynomial on the right does not depend on $\M$ but only on its rank. Observe that a similar phenomenon occurs for the Tutte polynomial, as we proved in Remark \ref{tuttecharacteristic}. This is not a coincidence, because the Tutte polynomial is also a valuative invariant. 

\begin{obs}
    It has not been proved yet that $Z$-polynomials are valuative invariants, but we expect that to be true and, along these lines, our Theorem \ref{main} is a good evidence to support that assertion. We believe that the techniques used by Ardila and Sanchez in \cite{ardila} can be extended to the $Z$-polynomials as well.
\end{obs}

\section{Free bases and non-degeneracy}\label{sec:four}

\subsection{Relaxations and free bases}

The fact that $\M$ has a circuit-hyperplane can be reformulated as a property of $\widetilde{\M}$ as follows.

\begin{defi}
    Let $\M$ be a matroid on $E$ that has at least two bases. We say that a basis $B$ of $\M$ is \emph{free} if $B\cup \{e\}$ is a circuit for each $e\in E\smallsetminus B$.
\end{defi}

\begin{lema}
    Let $\M$ be a matroid with a circuit-hyperplane $H$. Then, the corresponding relaxation $\widetilde{\M}$ is such that its basis $H$ is free. Conversely, if a matroid $\M$ with set of bases $\mathscr{B}$, $|\mathscr{B}|\geq 2$, has a free basis $B$, then $\mathscr{B}\smallsetminus \{B\}$ is the set of bases of a matroid $\M'$ that has $B$ as a circuit-hyperplane.
\end{lema}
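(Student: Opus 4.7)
The plan is to handle the two implications separately, with the converse being the more delicate one.

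For the forward direction, $H$ becomes a basis of $\widetilde{\M}$ by construction, and to see that it is free I would check that for every $e \in E \smallsetminus H$ the set $H \cup \{e\}$ is a circuit of $\widetilde{\M}$. It has size $k+1 > k = \rk(\widetilde{\M})$ and is therefore dependent; its proper $k$-subsets are either $H$ itself (a basis of $\widetilde{\M}$) or of the form $(H \smallsetminus \{h\}) \cup \{e\}$ for some $h \in H$, and the latter are bases of $\M$ by the submodular argument already performed in the proof of Proposition \ref{flatsrelaxation}, hence also bases of $\widetilde{\M}$.

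For the converse, I would put $\mathscr{B}' := \mathscr{B} \smallsetminus \{B\}$, which is nonempty by hypothesis, and verify the basis-exchange axiom on $\mathscr{B}'$. Given $B_1, B_2 \in \mathscr{B}'$ with $a \in B_1 \smallsetminus B_2$, exchange in $\M$ already supplies some $b \in B_2 \smallsetminus B_1$ with $(B_1 \smallsetminus \{a\}) \cup \{b\} \in \mathscr{B}$; the only thing to fear, and the principal obstacle of the argument, is that every such exchange produces precisely the forbidden basis $B$. In that case one has $B_1 = (B \smallsetminus \{b\}) \cup \{a\}$ with $a \notin B$, and the escape comes from a small counting step: $B_2 \not\subseteq B \cup \{a\}$, since otherwise $B_2$ would coincide either with $B$ (ruled out by $B_2 \in \mathscr{B}'$) or with $(B \smallsetminus \{c\}) \cup \{a\}$ for some $c \in B$ (ruled out by $a \notin B_2$). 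Picking any $b' \in B_2 \smallsetminus (B \cup \{a\})$, the freeness of $B$ in $\M$ forces $B \cup \{b'\}$ to be a circuit, so $(B_1 \smallsetminus \{a\}) \cup \{b'\} = (B \smallsetminus \{b\}) \cup \{b'\}$ is a basis of $\M$, and it differs from $B$ since $b' \notin B$.

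It remains to verify that $B$ is a circuit-hyperplane of the resulting matroid $\M'$. The set $B$ has size $k = \rk(\M')$ and is not a basis, hence is dependent; on the other hand, freeness of $B$ in $\M$ gives that $(B \smallsetminus \{b\}) \cup \{e\}$ is a basis of $\M$ for every $b \in B$ and $e \notin B$, and these sets survive in $\M'$ because they differ from $B$. This single observation shows both that every $(k-1)$-subset of $B$ is independent in $\M'$ --- so $B$ is a circuit and $\rk_{\M'}(B) = k-1$ --- and that $\rk_{\M'}(B \cup \{e\}) = k$ for every $e \notin B$, which makes $B$ a hyperplane.
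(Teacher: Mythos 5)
Your proof is correct and follows essentially the same route as the paper's: in the forward direction you verify that $H\cup\{e\}$ is a circuit by checking all its $k$-subsets are bases (leaning, as the paper does, on the computation in Proposition~\ref{flatsrelaxation}); in the converse you repair a failing basis exchange in exactly the same way, by observing that the only bad case forces $B_1=(B\smallsetminus\{b\})\cup\{a\}$ and then producing a replacement exchange element from $B_2\smallsetminus B$. The minor differences (you argue $B_2\not\subseteq B\cup\{a\}$ where the paper just picks any $b\in B_2\smallsetminus B$, which is the same set since $a\notin B_2$) are cosmetic.
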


\begin{proof}
    Let us assume that $\M$ has ground set $E=\{x_1,\ldots,x_n\}$, and that $H=\{x_1,\ldots,x_k\}$ is a circuit-hyperplane of $\M$. Consider the relaxation $\widetilde{\M}$ of $\M$ by $H$. Pick an element $x_j\in E\smallsetminus H = \{x_{k+1},\ldots, x_n\}$. We want to prove that $H\cup \{x_j\}$ is a circuit in $\widetilde{\M}$. It is clear that $H\cup \{x_j\}$ is dependent in $\widetilde{\M}$. Let us analyze what happens if we remove one element from $H\cup\{x_j\}$.
    \begin{itemize}
        \item If we remove $x_j$, we recover the basis $H$, which is of course independent in $\widetilde{\M}$.
        \item If we fix an index $1\leq i\leq k$ and consider the set $A=(H\cup\{x_j\})\smallsetminus \{x_i\}$. It is clear that $|A| = k$, and we can rewrite $A=(H\smallsetminus \{x_i\}) \cup \{x_j\}$. Since $H$ was a circuit in $\M$, then we have that $H\smallsetminus\{x_i\}$ is an independent subset of $\M$ of rank $k-1$, and hence an independent subset of $\widetilde{\M}$ of rank $k-1$. Since we proved in Proposition \ref{flatsrelaxation} that $H\smallsetminus\{x_i\}$ is a flat of $\widetilde{\M}$, in particular adding any element to this set increases its rank. Thus, $A=(H\smallsetminus\{x_i\})\cup\{x_j\}$ has rank $k$ in $\widetilde{\M}$. Since its cardinality is also $k$, we get that it is a basis of $\widetilde{\M}$ and, of course, independent.
    \end{itemize}
    It follows that $H\cup\{e\}$ is a circuit of $\widetilde{\M}$ for all $e\in E\smallsetminus H$, and hence $H$ is a free basis of $\widetilde{\M}$.
    
    Conversely, let us assume that the ground set of $\M$ is $E=\{x_1,\ldots,x_n\}$ and that $B = \{x_1,\ldots,x_k\}$ is a free basis of $\M$. Since $B\cup \{x_j\}$ is a circuit for every $j\in \{k+1,\ldots,n\}$, we get that:
        \begin{equation}\label{exprop} B_{ij} := (B\cup \{x_j\})\smallsetminus \{x_i\} = (B\smallsetminus\{x_i\})\cup \{x_j\}\end{equation}
    is a basis of $\M$ for every $1\leq i\leq k$ and $k+1\leq j\leq n$.
    
    Now, notice that since $|\mathscr{B}|\geq 2$, we automatically have that $\mathscr{B}\smallsetminus \{B\}\neq \varnothing$. So, to prove that $\mathscr{B}\smallsetminus\{B\}$ is the set of bases of a matroid, we must show that the basis-exchange-property holds, that is, for every two distinct bases of $\M$ that differ from $B$, and $a\in B_1\smallsetminus B_2$, there exists some $b\in B_2\smallsetminus B_1$ such that $(B_1\smallsetminus\{a\})\cup\{b\}\in \mathscr{B}\smallsetminus\{B\}$. 
    
    Let us start applying the basis-exchange-property to the bases $B_1$ and $B_2$ of $\M$ and $a\in B_1\smallsetminus B_2$, which yields an element $b'\in B_2\smallsetminus B_1$ such that $(B_1\smallsetminus\{a\})\cup \{b'\} \in \mathscr{B}$. If $(B_1\smallsetminus\{a\})\cup \{b'\}\neq B$ then the result follows. Let us assume then that $(B_1\smallsetminus\{a\})\cup\{b'\} = B$. It follows that:
        \[ B_1 \smallsetminus \{a\} = B\smallsetminus \{b'\},\]
    and since $a\in B_1$, the cardinality of the set on the left is strictly less than $|B_1| = |B|$, from where it follows that $b'\in B$. Hence $b'=x_i$ for some $1\leq i\leq k$. From equation \eqref{exprop}, we see that choosing any element $b\in B_2\smallsetminus B$ (there is at least one, since $B_2$ was assumed to be different from $B$), we have that $(B\smallsetminus \{b'\})\cup \{b\}$ is a basis of $\M$. Hence:
        \[ (B_1 \smallsetminus \{a\})\cup \{b\} \in \mathscr{B}\smallsetminus \{B\},\]
    and this proves that $\mathscr{B}\smallsetminus\{B\}$ is the set of bases of a matroid that we will denote by $\M'$. 
    
    To finish the proof, it remains to prove that $B$ is a circuit-hyperplane of $\M'$. 
        \begin{itemize}
            \item Let us prove that $B$ is a circuit of $\M'$. Consider $B\smallsetminus\{x_i\}$ for $1\leq i\leq k$. This is independent, since equation \eqref{exprop} guarantees that it is contained in another basis of $\M$ different from $B$, and hence in a basis of $\M'$.
            \item Let us prove that $B$ is a hyperplane. Again, equation \eqref{exprop} yields immediately that the rank of $B$ in $\M'$ is $k-1$. Finally, $B$ is a flat, since adding any element to $B$ would increase its rank again by \eqref{exprop}.\qedhere
        \end{itemize}
\end{proof}

\begin{obs}
    Let us fix a matroid $\M$ on $E$ with set of bases $\mathscr{B}$. We know \emph{one} particular scenario in which adding a new member to $\mathscr{B}$ preserves the basis-exchange-property. It is a result by Truemper \cite[Lemma 6]{truemper} (see also \cite{mills}) that if $\M$ is a simple and cosimple matroid on $E$ and $A\subseteq E$ is such that $\mathscr{B}(\M)\cup \{A\}$ is the set of bases of a matroid, then $A$ \emph{has to be} a circuit-hyperplane. Analogously, under the simplicity and cosimplicity hypotheses, the only case in which the removal of one basis yields a set of bases of a matroid is when the considered basis is free.
\end{obs}

\subsection{The proof of Theorem \ref{mainresult1}}

Now we can relate a combinatorial property of a basis of $\M$ to the Kazhdan--Lusztig theory of $\M$. 

\begin{teo}\label{freebasisnondeg}
    If $\M$ has a free basis, then $\M$ is non-degenerate.
\end{teo}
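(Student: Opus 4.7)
The plan is to reduce to the relaxation framework using the preceding lemma and then read off the degree of $P_\M$ from the positivity results already established. Let $k = \rk(\M)$ and let $B$ be a free basis of $\M$. By the preceding lemma, $\mathscr{B}(\M) \smallsetminus \{B\}$ is the set of bases of a matroid $\M'$ of rank $k$ in which $B$ is a circuit-hyperplane, and whose relaxation at $B$ recovers $\M$; that is, $\widetilde{\M'} = \M$.

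Applying Theorem \ref{main} to $\M'$ then gives
\[ P_\M(t) = P_{\M'}(t) + p_k(t). \]
By Theorem \ref{positivity}, $p_k$ has non-negative coefficients and, for $k \geq 3$, its coefficient in degree $\lfloor (k-1)/2 \rfloor$ is strictly positive. On the other hand, the non-negativity result of Braden and Huh \cite{bradenhuh} asserts that $P_{\M'}$ also has non-negative coefficients, so no cancellation can occur when the two polynomials are added. Hence
\[ [t^{\lfloor(k-1)/2\rfloor}] P_\M(t) \;\geq\; [t^{\lfloor(k-1)/2\rfloor}] p_k(t) \;>\; 0, \]
which forces $\deg P_\M = \lfloor(k-1)/2\rfloor$ and proves that $\M$ is non-degenerate whenever $k\geq 3$.

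The remaining cases $k \in \{1,2\}$ are immediate: having a free basis forces $\M$ to be loopless, since a loop $\ell$ would contradict that $B \cup \{\ell\}$ is a circuit (it would properly contain the circuit $\{\ell\}$). Any loopless matroid of rank $1$ or $2$ has $P_\M(t) = 1$ of degree $0 = \lfloor(k-1)/2\rfloor$, so $\M$ is non-degenerate for trivial reasons. No genuine obstacle is expected; the only delicate point is ensuring non-cancellation of the positive leading coefficient of $p_k$ against $P_{\M'}$, and this is taken care of by the Braden--Huh coefficient non-negativity.
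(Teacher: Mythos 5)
Your proof is correct and follows essentially the same route as the paper: realize $\M$ as the relaxation $\widetilde{\M'}$ at the free basis $B$, apply Theorem~\ref{main} to write $P_\M = P_{\M'} + p_k$, and read off the degree from the non-negativity of $P_{\M'}$ (Braden--Huh) together with the fact that $p_k$ has non-negative coefficients and degree $\lfloor (k-1)/2 \rfloor$. Your explicit treatment of the cases $k \in \{1,2\}$ (where $p_k$ is $1$ or $0$, so the degree argument alone is vacuous) is a small but genuine improvement in care over the paper's terser proof, which tacitly relies on the fact that loopless matroids of rank $\leq 2$ always have $P_\M = 1$.
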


\begin{proof}
    If $\M$ has a free basis $B$, then $\M$ is the relaxation of the matroid $\M'$ with set of bases $\mathscr{B}(\M)\smallsetminus \{B\}$, using the circuit-hyperplane $B$. Assume that the rank of $\M$ and $\M'$ is $k$. Using Theorem \ref{main}, we obtain that:
        \[ P_{\M}(t) = P_{\M'}(t) + p_k(t).\]
    Since the coefficients of $P_{\M'}$ are non-negative \cite[Theorem 1.2]{bradenhuh}, using the fact that the coefficients of $p_k$ are also non-negative and that the degree of $p_k$ is $\left\lfloor\frac{k-1}{2}\right\rfloor$, it follows that
        \[ \deg P_{\M} = \left\lfloor \frac{k-1}{2}\right\rfloor,\]
    and thus $\M$ is non-degenerate.
\end{proof}

\begin{obs}
    We believe that asymptotically all matroids have a free basis. This belief is supported by an equivalent conjecture posed by Bansal et al. \cite[Conjecture 22]{bansal}. A proof of the aforementioned conjecture would automatically imply that asymptotically all matroids are non-degenerate.
\end{obs}

\begin{obs}
    We do not need the non-negativity of the middle coefficients of the Kazhdan--Lusztig polynomials of matroids. In the proof of Theorem \ref{freebasisnondeg} what we need is that the coefficient of degree $\lfloor \frac{k-1}{2}\rfloor$ is non-negative. Although Braden et al. proved that it is indeed $\geq 0$, we are not aware of a proof of the non-negativity of that coefficient alone.
\end{obs}

\section{An application to sparse paving matroids}\label{sec:five}

\subsection{An overview} 

A matroid $\M$ of rank $k$ is said to be \emph{paving} if all of its circuits have size at least $k$. A paving matroid $\M$ is said to be \emph{sparse paving} if all of its hyperplanes have size at most $k$. The class of all sparse paving matroids is particularly relevant in the framework of asymptotic matroid enumeration. If we denote by $\operatorname{mat}(n)$ the number of matroids with ground set $\{1,\ldots,n\}$, and $\operatorname{sp}(n)$ the number of sparse paving matroids among them, a conjecture posed by Mayhew et al. in \cite{mayhew} asserts that
    \[ \lim_{n\to\infty} \frac{\operatorname{sp}(n)}{\operatorname{mat}(n)} = 1.\]
(see \cite[Conjecture 1.6]{mayhew} and the discussion after that statement). In other words, they conjecture that sparse paving matroids are predominant.

As was mentioned in the introduction, in \cite[Theorem 1]{lee} Lee, Nasr and Radcliffe found a formula for the Kazhdan--Lusztig polynomial of sparse paving matroids by enumerating suitable (skew) Young tableaux. They took advantage of their formula and several inequalities to prove that the Kazhdan--Lusztig polynomial $P_{\M}$ of a sparse paving matroid $\M$ of rank $k$ and cardinality $n$ satisfies two properties:
    \begin{itemize}
        \item The polynomial $P_{\M}$ is coefficient-wisely smaller than $P_{\U_{k,n}}$.
        \item The coefficients of $P_{\M}$ are non-negative.
    \end{itemize}

It is possible to use our results to give an independent proof of these facts, and also prove identical statements for $Q_{\M}$ and $Z_{\M}$ in a more direct way. We recall that in \cite{bradenhuh} it has been proved that $P_{\M}$ and $Q_{\M}$ have non-negative coefficients for \emph{all} matroids. The general proof is much more involved than what we are about to do here, but we include the one for $Q_{\M}$ and $Z_{\M}$ when $\M$ is sparse paving as applications of our results.

We will leverage the following easy result, which gives an alternative characterization of sparse paving matroids.

\begin{lema}\label{sparsepavingdefi}
    A matroid $\M$ on $E$ of rank $k$ is sparse paving if and only if every subset $A\subseteq E$ of cardinality $k$ is either a basis or a circuit-hyperplane.
\end{lema}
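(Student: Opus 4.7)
The plan is to prove both implications directly from the definitions of paving and sparse paving, using that a circuit‐hyperplane in a rank-$k$ matroid has cardinality exactly $k$.

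For the forward direction, assume $\M$ is sparse paving of rank $k$, and let $A\subseteq E$ with $|A|=k$. If $A$ is independent then $|A|=k=\rk(\M)$ forces $A$ to be a basis. Otherwise $A$ contains a circuit $C$; by the paving hypothesis $|C|\geq k$, and since $C\subseteq A$ this forces $C=A$. So $A$ is a circuit of size $k$ and has rank $k-1$. Its closure $\overline{A}$ is a flat of rank $k-1$, i.e.\ a hyperplane, hence $|\overline{A}|\leq k$ by sparse paving; combined with $A\subseteq \overline{A}$ and $|A|=k$, we conclude $\overline{A}=A$, so $A$ is a circuit-hyperplane.

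For the converse, assume every $k$-subset of $E$ is either a basis or a circuit-hyperplane. To show $\M$ is paving, suppose for contradiction that there is a circuit $C$ with $|C|<k$. Extend $C$ to a $k$-subset $A\supseteq C$ in $E$; this is possible since $|E|\geq \rk(\M)=k$. Then $A$ is dependent, so by hypothesis $A$ is a circuit-hyperplane, in particular a circuit. But a circuit cannot properly contain another circuit, contradicting $C\subsetneq A$. Hence $\M$ is paving.

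To finish, suppose some hyperplane $H$ has $|H|>k$, and pick any $k$-subset $A\subseteq H$. Since $\rk(H)=k-1$ we have $\rk(A)\leq k-1<|A|$, so $A$ is dependent. By hypothesis $A$ is a circuit-hyperplane, hence in particular a flat. But then $A\subsetneq H$ and adding any element of $H\smallsetminus A$ to $A$ keeps the rank bounded by $\rk(H)=k-1=\rk(A)$, contradicting $A$ being a flat. Thus every hyperplane has size at most $k$, and $\M$ is sparse paving. No step is particularly delicate here; the only thing to take care of is invoking the right closure argument in the forward direction and the right containment argument between hyperplanes in the reverse direction.
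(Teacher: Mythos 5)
Your proof is correct and follows essentially the same strategy as the paper's: deduce paving by extending a too-small circuit to a dependent $k$-set, deduce the hyperplane bound by taking a $k$-subset of an oversized hyperplane, and in the forward direction use paving to force a dependent $k$-set to be a circuit and then argue it is a flat. You are slightly more explicit than the paper (which says only ``proceeding in the opposite way'' for the hyperplane step), but the underlying argument is the same.
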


\begin{proof}
    If a matroid is such that every subset of cardinality $k$ is either a basis or a circuit-hyperplane, then it automatically is sparse paving. This is because the existence of a circuit of size less than $k$ is ruled out. Such a circuit can be completed to a set of cardinality $k$ which will fail to be a basis or a circuit-hyperplane. Analogous considerations avoid the possibility of the existence of a hyperplane of size larger than $k$. Conversely, assume that $\M$ is sparse paving and pick a subset $A$ of cardinality $k$ that is not a basis. Hence, we have that $A$ is dependent, and thus contains a circuit $C$. Since $\M$ is paving we have that $k\leq |C| \leq |A| = k$, and since $C\subseteq A$, it follows that $C=A$ and hence $A$ is a circuit. Proceeding in the opposite way, we can prove that $A$ is also a hyperplane.
\end{proof}

It is immediate that uniform matroids are sparse paving and, moreover, that all sparse paving matroids can be relaxed to a uniform matroid. This is because after relaxing one circuit-hyperplane, the remaining circuit-hyperplanes are still circuit-hyperplanes in the new matroid.

\subsection{The Kazhdan--Lusztig theory of sparse paving matroids} 

By combining the observations in the prior paragraph with our Theorem \ref{mainresult2} we get the following result.

\begin{teo}\label{sparsepaving}
    Let $\M$ be a sparse paving matroid of rank $k$ and cardinality $n$. Assume that $\M$ has exactly $\lambda$ circuit-hyperplanes. Then:
    \begin{align*}
        P_{\M}(t) &= P_{\U_{k,n}}(t) - \lambda p_k(t),\\
        Q_{\M}(t) &= Q_{\U_{k,n}}(t) - \lambda q_k(t),\\
        Z_{\M}(t) &= Z_{\U_{k,n}}(t) - \lambda z_k(t).
    \end{align*}
\end{teo}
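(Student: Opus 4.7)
My approach is induction on the number $\lambda$ of circuit-hyperplanes of $\M$. The base case $\lambda = 0$ is immediate from Lemma \ref{sparsepavingdefi}: a sparse paving matroid of rank $k$ and cardinality $n$ in which every $k$-subset of the ground set is a basis must be $\U_{k,n}$ itself, and each of the three asserted identities reduces to a tautology. For the inductive step, given a sparse paving matroid $\M$ with $\lambda \geq 1$ circuit-hyperplanes, I pick any one of them, call it $H$, and form the relaxation $\widetilde{\M}$. Theorem \ref{main} applies and yields
\[ P_{\widetilde{\M}}(t) = P_{\M}(t) + p_k(t),\quad Q_{\widetilde{\M}}(t) = Q_{\M}(t) + q_k(t),\quad Z_{\widetilde{\M}}(t) = Z_{\M}(t) + z_k(t). \]
If I can show that $\widetilde{\M}$ is again a sparse paving matroid of rank $k$ and cardinality $n$ having exactly $\lambda - 1$ circuit-hyperplanes, then the inductive hypothesis applied to $\widetilde{\M}$ gives $P_{\widetilde{\M}}(t) = P_{\U_{k,n}}(t) - (\lambda - 1)\, p_k(t)$, and rearranging the above identity yields $P_{\M}(t) = P_{\U_{k,n}}(t) - \lambda\, p_k(t)$. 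The same reasoning, verbatim, handles $Q_{\M}$ and $Z_{\M}$.

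The main, and really the only, obstacle is the stability of circuit-hyperplanes under relaxation, which I would verify using Proposition \ref{flatsrelaxation} and the observation in its proof that the rank functions of $\M$ and $\widetilde{\M}$ coincide off $H$. Concretely, if $H'$ is any circuit-hyperplane of $\M$ with $H' \neq H$, then $H'$ is still a flat of $\widetilde{\M}$ by Proposition \ref{flatsrelaxation}, and since its rank does not change it remains a hyperplane; every proper subset of $H'$ was independent in $\M$ and is therefore independent in $\widetilde{\M}$, while $H'$ itself is not a basis of $\widetilde{\M}$ (the only new basis produced by the relaxation is $H$), so $H'$ remains a circuit. Conversely, no $k$-subset other than $H$ has its basis-status altered by the relaxation, so the circuit-hyperplanes of $\widetilde{\M}$ are precisely those of $\M$ different from $H$. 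In particular every $k$-subset of $\widetilde{\M}$ is still either a basis or a circuit-hyperplane, so Lemma \ref{sparsepavingdefi} ensures that $\widetilde{\M}$ is sparse paving, with exactly $\lambda - 1$ circuit-hyperplanes. This closes the induction and proves all three identities simultaneously.
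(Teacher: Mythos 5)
Your proof is correct and follows essentially the same approach as the paper: relax circuit-hyperplanes one at a time until $\U_{k,n}$ is reached, applying Theorem \ref{main} at each step. The paper compresses this to a one-line argument (relying on a preliminary remark that relaxation preserves sparse-pavingness and decrements the circuit-hyperplane count), whereas you formalize it as an induction on $\lambda$ and spell out the verification that $\widetilde{\M}$ is again sparse paving with $\lambda-1$ circuit-hyperplanes; the substance is identical.
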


\begin{proof}
    If $\M$ has $\lambda$ circuit-hyperplanes, after relaxing all of them what we end up obtaining is just $\U_{k,n}$. Hence:
        \[ P_{\U_{k,n}}(t) = P_{\M}(t) + \underbrace{p_k(t) + \cdots + p_k(t)}_{\lambda \text{ times}},\]
    which yields the desired formula for $P_{\M}$. The other two are entirely analogous.
\end{proof}

Since we have proved that the coefficients of $p_k$ are always non-negative, our Theorem \ref{sparsepaving} provides a new independent proof of \cite[Conjecture 1.1]{lee} for sparse paving matroids. Moreover, we have proved the following:

\begin{cor}
    The uniform matroid $\U_{k,n}$ maximizes the Kazhdan--Lusztig coefficients, the inverse Kazhdan--Lusztig coefficients, and the $Z$-polynomial coefficients among all sparse paving matroids of rank $k$ and cardinality $n$.
\end{cor}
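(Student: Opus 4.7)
The plan is to derive this corollary as an immediate consequence of Theorem \ref{sparsepaving} together with the non-negativity statements of Theorem \ref{positivity}. Given a sparse paving matroid $\M$ of rank $k$ and cardinality $n$ with $\lambda$ circuit-hyperplanes, I would begin by rewriting the three identities of Theorem \ref{sparsepaving} in the form
\[ P_{\U_{k,n}}(t) - P_{\M}(t) = \lambda\, p_k(t), \qquad Q_{\U_{k,n}}(t) - Q_{\M}(t) = \lambda\, q_k(t), \qquad Z_{\U_{k,n}}(t) - Z_{\M}(t) = \lambda\, z_k(t). \]

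Next I would invoke Theorem \ref{positivity}, which asserts that each of $p_k$, $q_k$, and $z_k$ has non-negative coefficients. Since $\lambda$ is a non-negative integer (it counts a finite set), every coefficient of $\lambda\, p_k$, $\lambda\, q_k$, and $\lambda\, z_k$ is non-negative. Reading off the coefficient of $t^i$ from each of the three identities then gives
\[ [t^i]P_{\M}(t) \leq [t^i]P_{\U_{k,n}}(t), \qquad [t^i]Q_{\M}(t) \leq [t^i]Q_{\U_{k,n}}(t), \qquad [t^i]Z_{\M}(t) \leq [t^i]Z_{\U_{k,n}}(t) \]
for every $i \geq 0$, which is exactly the maximality claim.

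There is no real obstacle: the corollary is a direct combination of two previously established facts. The sparse paving hypothesis is used only through Theorem \ref{sparsepaving}, in that it guarantees that one may successively relax all circuit-hyperplanes of $\M$ and end up precisely at $\U_{k,n}$ (each relaxation producing one copy of $p_k$, $q_k$, and $z_k$); the non-negativity of $p_k$, $q_k$, $z_k$ supplied by Theorem \ref{positivity} then upgrades this telescoping identity to a coefficient-wise inequality. The case $\lambda = 0$ recovers $\M = \U_{k,n}$ itself, showing that the bound is sharp.
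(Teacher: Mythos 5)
Your proof is correct and takes essentially the same route as the paper: the corollary is read off from the identities of Theorem \ref{sparsepaving} together with the non-negativity of $p_k$, $q_k$, $z_k$ established in Theorem \ref{positivity}, using only that $\lambda \geq 0$.
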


Observe that Theorem \ref{sparsepaving} provides a formula depending only on $n$, $k$ and $\lambda$ for each of the Kazhdan--Lusztig, the inverse Kazhdan--Lusztig and the $Z$-polynomial of all sparse paving matroids.

For instance, from \cite[Theorem 3.3]{gaoinverse} we have a formula for $Q_{\U_{k,n}}(t)$:
    \[ Q_{\U_{k,n}}(t) = \binom{n}{k} \sum_{j=0}^{\lfloor \frac{k-1}{2}\rfloor} \frac{(n-k)(k-2j)}{(n-k+j)(n-j)} \binom{k}{j} t^j\]
so that, in particular, if $\M$ is a sparse paving matroid of rank $k$ and cardinality $n$ having exactly $\lambda$ hyperplanes, then:
\begin{align}
    [t^j]Q_{\M}(t) &= \binom{n}{k} \binom{k}{j} \frac{(n-k)(k-2j)}{(n-k+j)(n-j)}\\ &\;\;- \lambda \left[ \binom{k}{j} \frac{(k+1)(k-2j)}{(1+j)(k+1-j)} - \binom{k-1}{j} \frac{k(k-1-2j)}{(1+j)(k-j)}\right].\nonumber
\end{align}

Of course an analogous (but a bit more complicated) formula holds for $P_{\M}$ using the results of \cite{gaouniform}. Also, there exist upper-bounds for $\lambda$ that are good enough to show the positivity of the expression above with little effort. 

\begin{lema}\label{numbercircuithyperplanesparsepaving}
    Let $\M$ be a sparse paving matroid of rank $k$ having $n$ elements. Then, the number of circuit-hyperplanes $\lambda$ of $\M$ satisfies:
        \[ \lambda \leq \binom{n}{k}\min \left\{\frac{1}{k+1}, \frac{1}{n-k+1}\right\}.\]
\end{lema}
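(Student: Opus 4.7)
The proof strategy is a double counting argument built on a ``minimum symmetric difference'' property of the circuit-hyperplane family. The first step, and the main obstacle, is the structural claim that any two distinct circuit-hyperplanes $H_1, H_2$ of $\M$ satisfy $|H_1 \triangle H_2| \geq 4$. I would argue this by contradiction: if $|H_1 \triangle H_2| = 2$, write $H_1 = A \cup \{x\}$ and $H_2 = A \cup \{y\}$ with $|A| = k-1$ and $x \neq y$. Since $H_2$ is a circuit, $A = H_2 \smallsetminus \{y\}$ is independent of rank $k-1$, and since $H_2$ is a hyperplane of rank $k-1$, one has $\rk(A \cup \{y\}) = k-1$. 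On the other hand, $H_1$ is a flat, so $\rk(H_1 \cup \{y\}) > \rk(H_1) = k-1$. Applying submodularity of the rank to $A \cup \{x\}$ and $A \cup \{y\}$ gives
\[ \rk(A \cup \{x,y\}) \;\leq\; \rk(A \cup \{x\}) + \rk(A \cup \{y\}) - \rk(A) \;=\; k-1, \]
contradicting $\rk(H_1 \cup \{y\}) = \rk(A \cup \{x,y\}) > k-1$.

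With the symmetric-difference bound in hand, I would perform two dual double counts. First, any $(k+1)$-subset $B \subseteq E$ contains at most one circuit-hyperplane, since two distinct circuit-hyperplanes both lying inside a set of size $k+1$ would have symmetric difference at most $2$. Counting incidences $(H, B)$ with $H$ a circuit-hyperplane and $B \supset H$ of size $k+1$ yields
\[ \lambda (n-k) \;\leq\; \binom{n}{k+1}, \]
which rearranges to $\lambda \leq \binom{n}{k}/(k+1)$. Dually, any $(k-1)$-subset $A' \subseteq E$ is contained in at most one circuit-hyperplane (again by the symmetric-difference bound), so counting incidences $(A', H)$ with $|A'|=k-1$ and $A' \subset H$ yields
\[ \lambda \, k \;\leq\; \binom{n}{k-1}, \]
which rearranges to $\lambda \leq \binom{n}{k}/(n-k+1)$. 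Taking the minimum of the two bounds gives the claim.

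Only the first step uses matroid-theoretic content; the two counting inequalities are routine bookkeeping. The underlying picture is the classical fact that the family of circuit-hyperplanes of a sparse paving matroid forms a constant-weight binary code of minimum Hamming distance $4$, so the bound may also be viewed as a direct instance of the first-order Johnson bound applied to that code.
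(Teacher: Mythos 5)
Your proof is correct, and it supplies in full the argument that the paper outsources to the literature: the paper's proof of this lemma is only the citation ``See for example [Lemma 8.1, Ferroni] or [Theorem 4.8, Merino],'' and the remark immediately following the lemma already points out that the bound is an instance of the Johnson bound for constant-weight codes of minimum distance $4$. Your symmetric-difference claim is established cleanly by submodularity (with $X = A \cup \{x\}$, $Y = A \cup \{y\}$, $X \cap Y = A$, all of rank $k-1$, forcing $\rk(A \cup \{x,y\}) \leq k-1$ against $H_1$ being a flat), and the two incidence counts with $(k+1)$-sets and $(k-1)$-sets give exactly $\lambda(n-k) \leq \binom{n}{k+1}$ and $\lambda k \leq \binom{n}{k-1}$, which rearrange to the stated pair of bounds. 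One cosmetic point: the two divisions implicitly assume $n > k$ and $k > 0$; both degenerate cases force $\lambda = 0$, so nothing is lost, but it is worth a clause to that effect.
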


\begin{proof}
   See for example \cite[Lemma 8.1]{ferroni3} or \cite[Theorem 4.8]{merino}.
\end{proof}

\begin{obs}
    There exist tighter bounds for the number of circuit-hyperplanes of a sparse paving matroid of rank $k$ and cardinality $n$ for some particular values of $k$ and $n$. In fact, this quantity coincides with the independence number of the Johnson graph $J(n,k)$, and with the maximum number of words that a binary code with word-length $n$ and constant weight $k$ can have, under the constraint of minimal distance $4$. Also, Lemma \ref{numbercircuithyperplanesparsepaving} is a weaker version of what in the coding theory literature is called the ``Johnson Bound'' (see \cite{johnson}). The exact computation of this maximum is a difficult problem, and precise values are in fact known only for few particular cases.
\end{obs}

We will write $\lambda_{k,n}$ to denote the expression $\binom{n}{k}\min \left\{\frac{1}{k+1}, \frac{1}{n-k+1}\right\}$.

\begin{teo}\label{positivitysparsepaving}
    If $\M$ is a sparse paving matroid then $P_{\M}$, $Q_{\M}$ and $Z_{\M}$ have non-negative coefficients.
\end{teo}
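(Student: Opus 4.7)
The plan is to combine three facts already established in the paper: the decomposition $P_{\M}(t) = P_{\U_{k,n}}(t) - \lambda p_k(t)$ from Theorem \ref{sparsepaving} (together with the analogous expressions for $Q_{\M}$ and $Z_{\M}$), the upper bound $\lambda \leq \lambda_{k,n} = \binom{n}{k}\min\{\frac{1}{k+1}, \frac{1}{n-k+1}\}$ from Lemma \ref{numbercircuithyperplanesparsepaving}, and the closed formulas for the coefficients of $P_{\U_{k,n}}$, $Q_{\U_{k,n}}$ and $Z_{\U_{k,n}}$ together with the expressions for $p_k$, $q_k$, $z_k$ obtained via Corollary \ref{formulitas} and the proof of Theorem \ref{positivity}. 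Since $p_k$, $q_k$, $z_k$ have non-negative coefficients by Theorem \ref{positivity}, the task reduces to establishing the coefficient-wise inequalities $[t^j]P_{\U_{k,n}}(t) \geq \lambda_{k,n}\, [t^j]p_k(t)$ and the analogues for $Q$ and $Z$, for every $j$ in the relevant range. For the Kazhdan--Lusztig polynomial this is essentially the argument of \cite{lee}; the genuinely new cases to address here are $Q_{\M}$ and $Z_{\M}$.

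For $Q_{\M}$, I would start from the explicit expression for $[t^j]Q_{\M}(t)$ displayed just before Lemma \ref{numbercircuithyperplanesparsepaving}, in which the coefficient is already written as a contribution coming from $Q_{\U_{k,n}}$ minus $\lambda$ times a contribution coming from $q_k$. Substituting $\lambda \leq \lambda_{k,n}$, factoring out the common $\binom{n}{k}$ and the non-negative factor $(k-2j)$ (valid in the range $0 \leq j \leq \lfloor (k-1)/2 \rfloor$), the problem reduces to an elementary inequality between products of binomial coefficients. This I expect to verify by direct estimation, telescoping the binomial ratios of the form $\frac{n-k}{n-k+j}\cdot\frac{\binom{k}{j}}{\binom{k-1}{j}}$ and comparing against the corresponding factors arising from $q_k$.

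For $Z_{\M}$, I would use the explicit expression $z_k(t) = \sum_{j=1}^{k-1}\frac{2}{k}\binom{k}{j+1}\binom{k}{j-1}t^j$ derived in the proof of Theorem \ref{positivity}, together with a closed formula for the coefficients of $Z_{\U_{k,n}}$ obtained by expanding its defining sum over $\mathscr{L}(\U_{k,n})$ and using the known formula for $P$ of a uniform matroid. Again, the required inequality becomes a binomial comparison, which splits naturally into the two cases appearing in the $\min$ defining $\lambda_{k,n}$. The main obstacle I anticipate is the bookkeeping in the $Z_{\M}$ case: since $z_k$ has degree $k-1$, there are more coefficients to track, and the middle coefficients, where both $\binom{k}{j+1}$ and $\binom{k}{j-1}$ are largest, are the most delicate. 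Nevertheless, since every quantity is given by an explicit closed formula and $\lambda_{k,n}$ already carries a factor $\frac{1}{k+1}$ (or $\frac{1}{n-k+1}$), no essential difficulty beyond careful algebraic manipulation is expected.
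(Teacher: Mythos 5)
Your strategy agrees with the paper for $P_{\M}$ (cite \cite{lee}) and for $Q_{\M}$ (substitute $\lambda\leq\lambda_{k,n}$ into the explicit coefficient formula and push through a case analysis on the minimum and on $j$, exactly as in the appendix). One small imprecision for $Q_{\M}$: the factor $(k-2j)$ does not divide $[t^j]q_k$ cleanly, since the second term of $q_k$ carries $(k-1-2j)$ rather than $(k-2j)$; the paper only ``factors out'' $(k-2j)$ after strengthening the target inequality by discarding the subtracted (negative) part of $q_k$, which you do not say explicitly. That is easy to repair.

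For $Z_{\M}$, however, your route is genuinely different from the paper's and, while workable in principle, misses the key structural idea that keeps the argument short. You propose to write a closed formula for each coefficient of $Z_{\U_{k,n}}$ and compare it directly against $\lambda_{k,n}\cdot\frac{2}{k}\binom{k}{j+1}\binom{k}{j-1}$; as you yourself flag, this is a nontrivial double-sum-versus-product comparison. The paper instead pairs the recursion
\[Z_{\U_{k,n}}(t) = t^k + \sum_{j=1}^k \binom{n}{k-j} t^{k-j} P_{\U_{j,n-k+j}}(t)\]
with the \emph{structural} decomposition of $z_k$ obtained inside the proof of Theorem \ref{main},
\[z_k(t) = (k-1)t^{k-1} + \sum_{j=2}^k \binom{k}{j}t^{k-j}p_j(t),\]
rather than the closed-form product formula. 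This aligns the two expansions term by term and reduces the $Z$ case to the already-established $P$ case: each bracket $\binom{n}{k-j}P_{\U_{j,n-k+j}} - \lambda\binom{k}{j}p_j$ is handled by the identity $\binom{n}{k-j}\binom{n-k+j}{j} = \binom{n}{k}\binom{k}{j}$ together with the trivial inequality $\min\{\tfrac{1}{j+1},\tfrac{1}{n-k+1}\}\geq\min\{\tfrac{1}{k+1},\tfrac{1}{n-k+1}\}$. That reduction avoids all the ``bookkeeping'' you anticipate. Your plan is not wrong, but if you want a clean proof you should swap your closed-form expression for $z_k$ for the convolution-style decomposition above, so that $Z$ rides on the back of $P$ instead of being proved from scratch.
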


\begin{proof}
    For the non-negativity of $P_{\M}$ we refer to \cite[Theorem 1]{lee}, although it is possible to give an alternative proof using our formula. We included the full proof for $Q_{\M}$ in the appendix; the starting point is to split the proof into two cases, namely $2k\leq n$ and $2k>n$, to avoid working with the minimum of two fractions in every occurrence of $\lambda_{k,n}$
    
    For $Z_{\M}$ we can use the following recursion found by Proudfoot et al. in \cite[Section 4]{proudfootzeta}:
    \[ Z_{\U_{k,n}}(t) = t^k + \sum_{j=1}^k \binom{n}{k-j} t^{k-j} P_{\U_{j,n-k+j}}(t).\]
    Hence, if $\M$ has rank $k$, cardinality $n$, and exactly $\lambda$ circuit-hyperplanes, using Theorem \ref{sparsepaving} and Corollary \ref{formulitas}:
    \begin{align*} 
        Z_{\M}(t) &= Z_{\U_{k,n}}(t) - \lambda z_k(t)\\
        &= t^k + \left(\binom{n}{k-1}-\lambda(k-1)\right) t^{k-1} + \sum_{j=2}^k \left(\binom{n}{k-j}P_{\U_{j,n-k+j}}(t) - \lambda \binom{k}{j} p_j\right) t^{k-j}.
    \end{align*}
    
    We will prove that each summand in the last expression is a polynomial with non-negative coefficients. Observe that the second summand has a non-negative coefficient, since $\lambda \leq \lambda_{k,n}$ by Lemma \ref{numbercircuithyperplanesparsepaving}, and:
        \[ \lambda_{k,n} (k-1) \leq \frac{1}{n-k+1}\binom{n}{k} (k-1) = \frac{k-1}{k}\binom{n}{k-1} \leq \binom{n}{k-1}.\] 
    
    Now, if we use that $P_{\U_{j,n-k+j}} - \lambda_{j,n-k+j} p_j$ has positive coefficients (which is the first statement in this theorem), it just suffices to verify the following inequality:
        \[\binom{n}{k-j}\lambda_{j,n-k+j} \geq \lambda_{k,n} \binom{k}{j},\]
    which is just:
        \[\binom{n}{k-j}\binom{n-k+j}{j} \min\left\{\tfrac{1}{j+1}, \tfrac{1}{n-k+1}\right\} \geq \binom{n}{k}\binom{k}{j}\min\left\{\tfrac{1}{k+1},\tfrac{1}{n-k+1}\right\}.\]
    Since it is easy to verify the identity $\binom{n}{k-j}\binom{n-k+j}{j} = \binom{n}{k}\binom{k}{j}$, it suffices to show only that:
    \[\min\left\{\tfrac{1}{j+1}, \tfrac{1}{n-k+1}\right\} \geq \min\left\{\tfrac{1}{k+1},\tfrac{1}{n-k+1}\right\},\]
    which holds trivially since $j\leq k$.
\end{proof}

\section{Final remarks}\label{sec:six}

\subsection{Free bases and regularity}

Conjecture \ref{conject} asserts that connected matroids that are regular, i.e. representable over all fields, are non-degenerate. Although there is good evidence that almost all matroids are expected to possess a free basis (see the discussion in \cite[Section 7.2]{bansal}), a natural question that may arise at this point is which of these are regular and connected.

Since almost all matroids are non-representable \cite[Theorem 1.1]{nelson}, in particular almost all matroids are non-regular. However, although the family of matroids with a free basis is expected to be asymptotically predominant, the family of regular matroids with a free basis is almost negligible among the whole family of regular matroids. 

\begin{prop}\label{charoxley}
    Let $\M$ be a regular matroid with a free basis. Then $\M$ is graphic, and is obtained from a cycle graph with at least two edges by repeatedly adding a possibly empty set of parallel edges to one of the edges of the cycle.
\end{prop}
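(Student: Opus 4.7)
The plan is to deduce the structure of $\M$ from its fundamental cocircuits, which the free basis hypothesis pins down completely, and then to finish using binarity of $\M$ (a consequence of regularity).

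First I would handle trivial features. Any loop $\ell$ would be a singleton circuit strictly inside $B\cup\{\ell\}$, contradicting that the latter is a circuit, so $\M$ is loopless. The case $k=1$, where $k:=\rk(\M)$, should be treated separately: the free basis condition directly forces every element distinct from $b_1$ to be parallel to $b_1$, so $\M\cong\U_{1,n}$ with $n\geq 2$, which is the cycle matroid of $C_2$ with $n-2$ added parallel edges. So assume $k\geq 2$ henceforth. For each $b_i\in B$ and $e\notin B$, a parallel pair $\{b_i,e\}$ would be a proper $2$-circuit inside $B\cup\{e\}$, so each $b_i$ lies in its own parallel class. Passing to the simplification of $\M$, the image of $B$ remains a free basis of rank $k$ and the simplification is still regular, so we may further assume $\M$ is simple.

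Setting $L:=E\smallsetminus B$, the standard duality $b\in C(e,B)\iff e\in C^{*}(b,B)$ between fundamental circuits and cocircuits, combined with the free basis equality $C(e,B)=B\cup\{e\}$, pins down each fundamental cocircuit as
\[ C^{*}(b_{i},B)=L\cup\{b_{i}\}, \qquad i=1,\ldots,k. \]
Since $\M$ is regular it is binary, hence the cocycle space of $\M$ over $\mathbb{F}_{2}$ equals the $\mathbb{F}_{2}$-span of these $k$ cocircuits. A symmetric-difference computation shows this span consists of the sets $\{b_i:i\in I\}$ for $|I|$ even and $L\cup\{b_i:i\in I\}$ for $|I|$ odd. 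The cycle space is the orthogonal complement, so $D\subseteq E$ is a cycle iff $|D\cap(L\cup\{b_i\})|$ is even for each $i$; rewriting this as $|D\cap L|+[b_i\in D]$ and demanding the same parity for every $i$ leaves only two mutually exclusive cases: either $B\subseteq D$ with $|D\cap L|$ odd, or $D\cap B=\varnothing$ with $|D\cap L|$ even.

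Minimal non-empty cycles of the first type are the known circuits $B\cup\{e\}$ for $e\in L$. Minimal non-empty cycles of the second type are $2$-subsets $\{e,e'\}\subseteq L$ which, as $\M$ is loopless, must be $2$-circuits of $\M$. Simplicity of $\M$ then forces $|L|\leq 1$, and $|\mathscr{B}(\M)|\geq 2$ excludes $|L|=0$, so $|L|=1$ and $\M\cong\U_{k,k+1}$, the cycle matroid of $C_{k+1}$. Undoing the simplification, the original (not necessarily simple) matroid arises from $\U_{k,k+1}$ by inflating the unique non-basis element into a parallel class of size $m\geq 1$, i.e., it is the cycle matroid of $C_{k+1}$ with $m-1$ extra parallel edges added to one fixed edge, as claimed. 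The main obstacle of this plan is the binary cocycle-space computation and its orthogonality consequence: I must verify carefully that simplicity of $\M$, together with the above characterization of cycles, really forbids all unaccounted circuits and leaves only the advertised family.
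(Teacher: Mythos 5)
Your proof is correct and rests on the same key fact as the paper's, namely that regularity implies binarity and hence that the $\mathbb{F}_2$-cycle/cocycle structure forces all elements outside the free basis to be pairwise parallel. The paper reaches this more directly: it takes the two fundamental circuits $B\cup\{y\}$ and $B\cup\{z\}$, observes that in a binary matroid their symmetric difference $\{y,z\}$ is a disjoint union of circuits, and since neither singleton can be a circuit (being a proper subset of a circuit), $\{y,z\}$ is itself a circuit. Your route through the fundamental cocircuits $C^{*}(b_i,B)=L\cup\{b_i\}$, the full cocycle space, and its orthogonal complement is a valid dual formulation of the same binarity argument, and your preliminary reduction to the simple case (checking that the image of $B$ remains a free basis after simplification) is carefully handled; it is just somewhat more machinery than the two-circuit symmetric difference that the paper uses. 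Both arguments terminate identically: once $E\smallsetminus B$ is shown to be a single parallel class, the matroid is $\U_{k,k+1}$ with that element inflated, i.e., the cycle $C_{k+1}$ with parallel edges added to one edge.
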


\begin{proof}
    Since $\M$ is regular, in particular $\M$ is binary. Let us call $B$ the basis of $\M$ such that $B\cup \{e\}$ is a circuit for every $e\notin B$. If $E\smallsetminus B$ consists of only one element, then the conclusion follows. Suppose then that we can pick two distinct elements $y$ and $z$ not in $B$. Since $\M$ is binary, by \cite[Theorem 9.1.2]{oxley} we have that the circuits $C_1 = B\cup\{y\}$ and $C_2=B\cup\{z\}$ are such that the symmetric difference $C_1\triangle C_2 = \{y,z\}$ is a disjoint union of circuits. Since both $B\cup\{y\}$ and $B\cup\{z\}$ are circuits, it cannot happen that either $\{y\}$ nor $\{z\}$ are circuits. The only possibility is that $\{y,z\}$ is itself a circuit. From this, it follows that the elements of $E\smallsetminus B$ are parallel to each other, and the proof is complete. 
\end{proof}

\begin{obs}
    In other words, the only matroids that are regular and contain a free basis are the matroids $\T_{k,n}$, where $1\leq k\leq n-1$, obtained by the circuit-hyperplane relaxation of $\U_{k-1,k}\oplus \U_{1,n-k}$. Observe also that in Proposition \ref{charoxley} we can change the word ``regular'' for ``binary'' and the conclusion still holds.
\end{obs}

\subsection{Modularity and non-degeneracy} 

In light of Theorem \ref{mainresult1} it is reasonable to expect now that degenerate matroids are a very restrictive class of matroids.

So far, computational experiments and partial results have yielded some examples of degenerate matroids, but up to this point they all seem to share one particular property: in some sense they are very close to being modular.

In a preliminary version of this manuscript, we left the following question.

\begin{question}[Settled by N. Proudfoot]\label{pregunta}
    Is the following assertion true?
    \[ \text{$\M$ is connected, simple and degenerate $\iff$ $\M$ is a projective geometry of rank } k\geq 3 \] 
\end{question}

It is possible to prove that the implication $\Leftarrow$ is true by noticing that projective geometries are modular, and Elias et al. proved that modular matroids are degenerate \cite[Proposition 2.14]{eliasproudfoot}. However, the implication $\Rightarrow$ is not true, as can be shown by the following example communicated to us by Nicholas Proudfoot.

\begin{ej}\label{exampleproudfoot}
    Let $\M$ be the projective geometry representable over the field $\mathbb{F}_2$ of rank $5$. Since $\M$ is modular, we know that $P_{\M}(t) = 1$, see \cite{eliasproudfoot}. According to \texttt{Sage} \cite{sagemath}, we have that $P_{\M\smallsetminus \{e\}}(t) = t + 1$ for any element $e\in E(\M)$. Also, $\M\smallsetminus \{e\}$ is a connected, simple matroid with $30$ elements and rank $5$ that is not a projective geometry (it is not modular) but is still degenerate. 
\end{ej}

\section{Appendix}\label{sec:appendix}

Here we will give the proof of Theorem \ref{positivitysparsepaving} in the remaining case of $Q_\M$. We will assume $\M$ is a sparse paving matroid of rank $k$, cardinality $n$, with $\lambda$ circuit-hyperplanes. We have already stated that:

\begin{align}\label{eqsp}
    [t^j]Q_{\M}(t) &= \binom{n}{k} \binom{k}{j} \frac{(n-k)(k-2j)}{(n-k+j)(n-j)}\\ &\;- \lambda \left[ \binom{k}{j} \frac{(k+1)(k-2j)}{(1+j)(k+1-j)} - \binom{k-1}{j} \frac{k(k-1-2j)}{(1+j)(k-j)}\right].\nonumber
\end{align}

Our claim is that when $\lambda \leq \lambda_{k,n}$, then the right-hand-side of equation \eqref{eqsp} is non-negative. In other words, for such a $\lambda$, after simplifying a factor $\binom{k}{j}$, what we assert is that:
\begin{equation}\label{hardq} \binom{n}{k}\frac{(n-k)(k-2j)}{(n-k+j)(n-j)} \geq \lambda \left[ \frac{(k+1)(k-2j)}{(1+j)(k+1-j)} - \frac{k-1-2j}{1+j}\right]. \end{equation}

We are going to prove it by separating into two cases, according to whether $2k\leq n$ or $2k > n$. The structure of the proof is as follows:
    \begin{itemize}
        \item $2k\leq n$.
            \begin{itemize}
                \item $k\geq 7$ and $j\geq 2$.
                \item $k\geq 1$ and $j = 0$.
                \item $k\geq 1$ and $j = 1$.
                \item Remaining cases (finite) by inspection.
            \end{itemize}
        \item $2k > n$.
            \begin{itemize}
                \item $k\geq 4$ and $j\geq 2$.
                \item $k\geq 1$ and $j=0$.
                \item $k\geq 1$ and $j=1$.
                \item Remaining cases (finite) by inspection.
            \end{itemize}
    \end{itemize}

Recall that we always assume that $2j \leq k$ and that $k\leq n$. The above splitting of cases helps to decide explicitly which fraction to use as $\lambda_{k,n}$, to avoid working with the minimum of two fractions.

\begin{itemize}
    \item If $2k \leq n$, then we have $\lambda_{k,n}= \frac{1}{n-k+1}\binom{n}{k}$. So, under this hypothesis, replacing $\lambda$ by its maximum possible value in equation \eqref{hardq}, it suffices to prove that:
    \begin{equation}\label{cleared0} \frac{(n-k)(k-2j)}{(n-k+j)(n-j)} \geq \frac{1}{n-k+1} \left[ \frac{(k+1)(k-2j)}{(1+j)(k+1-j)} - \frac{k-1-2j}{1+j}\right]. \end{equation}
    We can forget the minus sign and try to prove a stronger inequality instead:
    \[\frac{(n-k)(k-2j)}{(n-k+j)(n-j)} \geq \frac{1}{n-k+1}\cdot  \frac{(k+1)(k-2j)}{(1+j)(k+1-j)}. \]
    After cancelling the common factors, and clearing denominators, it suffices to show that:
    \begin{align*}
         (n-k)(1+j)(k+1-j)(n-k+1) & \geq (n-k+j)(n-j)(k+1).
    \end{align*}
    Observe that, by the inequality between the arithmetic and geometric mean we have that $(n-k+j)(n-j) \leq \left(\frac{2n-k}{2}\right)^2$. In particular, it suffices now to prove: 
    \[4(n-k)(1+j)(k+1-j)(n-k+1) \geq (2n-k)^2(k+1),\]
    which, after moving some terms, is equivalent to proving that:
    \[ \frac{4(n-k)(n-k+1)}{(2n-k)^2} (1+j)(k+1-j) \geq k+1.\]
    When one fixes the value of $n$ and considers the first fraction on the left as a function of $k$, it is easy to see (for example by differentiating) that it is strictly decreasing on the interval $[1, n/2]$, so that it assumes its minimum when $k = \lfloor\frac{n}{2}\rfloor$. In that case, its value is 
    \[\frac{4(n-n/2)(n-n/2+1)}{(2n-n/2)^2} = \frac{4}{9}+\frac{8}{9n},\] which is always greater or equal to $\frac{4}{9}$. In particular, it is enough to prove that:
    \[ \frac{4}{9} (1+j)(k+1-j) \geq k + 1,\]
    which is always true when $j\geq 2$ and $k\geq 7$. \\
    
    Now, in equation \eqref{cleared0}, we have to consider $j=0$ and $j=1$ separately.
    
    \begin{itemize}
        \item If $j=0$, equation \eqref{cleared0} is equivalent to:
            \[(n-k)k(k+1)(n-k+1) \geq (n-k)n(k+1)k-(k-1)(k+1)(n-k)n,\]
        and cancelling $(n-k)(k+1)$ in our three terms, is just:
            \[ k(n-k+1) \geq kn - (k-1) n.\]
        Since $n\geq 2k$, it suffices to prove:
        \[ k(n-k+1) \geq kn - (k-1) 2k.\]
        This allows us to cancel $k$, and get us to prove:
        \[ n-k+1 \geq n - 2(k-1).\]
        After cancelling the summand $n$, the inequality reduces to $k\geq 1$, which is obvious.
        \item If $j = 1$, equation \eqref{cleared0}, after cancelling a $(n-k+1)$ present in all terms, and clearing denominators, is just:
            \[(n-k)(k-2)2k\geq (n-1)(k+1)(k-2) - (k-3)(n-1)k.\]
        We can try to prove the stronger inequality:
            \[(n-k)(k-2)2k\geq (n-1)(k+1)(k-2) - (k-3)(n-1)(k-2),\]
        which allows us to cancel out a $k-2$ factor, and reduces to:
            \[(n-k)2k\geq (n-1)(k+1) - (k-3)(n-1).\]
        Observe that, since $n-k\geq \frac{n}{2}$, we have that $(n-k)2k \geq nk$, so that it suffices to prove:
            \[ nk + (k-3)(n-1) \geq (n-1)(k-1),\]
        which is just equivalent to:
            \[ kn+2 \geq 2n,\]
        that holds for all $k\geq 2$.
    \end{itemize}
    
    Up to this point, it is clear that we have proved that when $k\geq 7$ and $2k\leq n$, the polynomial has positive coefficients. Observe that if $k\leq 6$, then $0 \leq j \leq \lfloor\frac{k-1}{2}\rfloor \leq 2$. The only cases that are not covered by the above considerations are the ones where $j = 2$ and $k \in \{5,6\}$. Plugging $j = 2$ and $k=5$ and clearing denominators in equation \eqref{cleared0} requires to prove:
        \[ (n-5)(1)(3)(4)(n-4) \geq (n-3)(n-2)(6)(1)- 0,\]
    which is just:
        \[ 12(n-4)(n-5) \geq 6(n-3)(n-2),\]
    and this holds for all $n\geq 10$. We get an analogous inequality for $j=2$ and $k=6$. 
    
    This proves that when $2k\leq n$ equation \eqref{cleared0} and equation \eqref{hardq} are both true.
    \item If $2k > n$, then the condition on $\lambda$ is $\lambda\leq \frac{1}{k+1}\binom{n}{k}$. In particular, to prove \eqref{hardq} it suffices to show that:
        \begin{equation}\label{secondcase}
            \frac{(n-k)(k-2j)}{(n-k+j)(n-j)} \geq \frac{1}{k+1}\left[ \frac{(k+1)(k-2j)}{(1+j)(k+1-j)} - \frac{k-1-2j}{1+j}\right],
        \end{equation}
    we can use the same trick as before. First, ignore the negative term on the right-hand-side. Hence, it suffices to show that:
        \[ \frac{4(n-k)(k-2j)}{(n-k+j)(n-j)} \geq \frac{1}{k+1} \frac{(k+1)(k-2j)}{(1+j)(k+1-j)},\]
    which can be simplified to:
        \begin{equation}
            \label{facil} n-k \geq \frac{n-k+j}{1+j} \cdot\frac{n-j}{k+1-j}.
        \end{equation}
    To prove this inequality, fix $k$ and $n$ such that $2k\geq n$ and consider the right-hand-side as a function of $j$. Observe that the derivative of the right-hand-side with respect to $j$ is:
        \[ \frac{\partial}{\partial j} \text{RHS} = -\frac{(n+1)(n-k-1)(k-2j)}{(j+1)^2(k-j+1)^2},\]
    which is always negative. In particular, the right-hand-side is a decreasing function on $j$, if we fix $n$ and $k$. Assume $j\geq 2$, so that to prove \eqref{facil} it suffices to show.
        \[ n - k \geq \frac{n-k+2}{3} \cdot \frac{n-2}{k-1},\]
    This can be rewritten as:
        \[ (n-k-1)(n-3k+4)\leq 0,\]
    which holds automatically since $n\leq 2k \leq 3k-4$ for every $k\geq 4$. 
    \begin{itemize}
        \item If $j=0$, \eqref{secondcase} can be reduced to:
            \[ \frac{(n-k)k}{(n-k)n} \geq \frac{1}{k+1} \left[ \frac{(k+1)k}{(1)(k+1)} - \frac{k-1}{1}\right].\]
        This is just:
            \[ \frac{k}{n} \geq \frac{1}{k+1}, \]
        which is trivially true, since $k(k+1) = k^2+k \geq k + k \geq n$.
        
        \item If $j=1$ on \eqref{secondcase}, we have to show that:
            \[ \frac{(n-k)(k-2)}{(n-k+1)(n-1)} \geq \frac{1}{k+1}\left[ \frac{(k+1)(k-2)}{2k} - \frac{k-3}{2} \right].\]
        Observe that the the fraction $\frac{n-k}{n-k+1}$ is at least $\frac{1}{2}$. Thus, we only need to prove:
            \[\frac{k-2}{2(n-1)} \geq \frac{1}{k+1}\left[ \frac{(k+1)(k-2)}{2k} - \frac{k-3}{2} \right].\]
        After simplifying the $n$ in the denominator of the right-hand-side by using that $2k\geq n$, we reduce to:
            \[\frac{k-2}{2(2k-1)} \geq \frac{1}{k+1}\left[ \frac{(k+1)(k-2)}{2k} - \frac{k-3}{2} \right],\]
        which depends only on $k$ and can be checked to be true for $k\geq 5$.
    \end{itemize}
    All the remaining cases, which are of course finite, can be checked with a direct computation.
\end{itemize}

\section{Acknowledgements}

Both authors want to thank Prof. James Oxley for providing a nice proof for Proposition \ref{charoxley}, and Nicholas Proudfoot for making several useful suggestions and answering Question \ref{pregunta} via Example \ref{exampleproudfoot}. Also, they want to express gratitude to the reviewers of the article for their careful reading of the first version of this manuscript and several useful comments and corrections that served to enhance many important aspects of the exposition.

\bibliographystyle{amsalpha}
\bibliography{bibliography}

\end{document}